\def\overset#1#2{{\mathrel{\mathop {{#2}_{}}\limits^{#1}}}}
\def\underset#1#2{{\mathrel{\mathop {{}_{} {#2}}\limits_{{#1}_{}}}}}
\def\upplim_#1{\underset{#1}{\overline\lim}\;}
\def\lowlim_#1{\underset{#1}{\underline\lim}\;}
\newtheorem{corollary}[equation]{Corollary}
\newtheorem{lemma}[equation]{Lemma}
\newtheorem{proposition}[equation]{Proposition}
\newtheorem{theorem}[equation]{Theorem}
\newcommand{\C}{{\mathbb{C}}}
\renewcommand{\P}{{\mathbb{P}}}
\newcommand{\R}{{\mathbb{R}}}
\newcommand{\Z}{\mathbb{Z}}
\numberwithin{equation}{section}
\title[Degeneracy second main theorems for meromorphic mappings]{Degeneracy second main theorems for meromorphic mappings into projective varieties with hypersurfaces} 
\author{Si Duc Quang}
\begin{document}

\begin{abstract}
The purpose of this paper has twofold. The first is to establish a second main theorem with truncated counting functions for algebraically nondegenerate meromorphic mappings into an arbitrary projective variety intersecting a family of hypersurfaces in subgeneral position. 
%Let $f$ be an algebraically nondegenerate meromorphic mapping from a smooth projective subvariety $V\subset\P^n(\C)$ of dimension $k\ge 1$ and let $Q_1,...,Q_q$ be $q$ hypersurfaces in $\P^n(\C)$ of degree $d_i$, located in $N-$subgeneral position in $V$. We will prove that, for every $\epsilon >0$, 
%$$||\ (q-(N-k+1)(k+1)-\epsilon) T_f(r)\le\sum_{i=1}^q\frac{1}{d_i}N^{[M_0]}(r,f^*Q_i)+o(T_f(r)),$$
%where $M_0$ is a positive integer estimated explicitly. 
In our result, the truncation level of the counting functions is estimated explicitly. Our result is an extension of the classical second main theorem of H. Cartan, also is a generalization of the recent second main theorem of M. Ru and improves some recent results. The second purpose of this paper is to give another proof for the second main theorem for the special case where the projective variety is a projective space, by which the truncation level of the counting functions is estimated better than that of the general case.
\end{abstract}

\def\thefootnote{\empty}
\footnotetext{
2010 Mathematics Subject Classification:
Primary 32H30, 32A22; Secondary 30D35.\\
\hskip8pt Key words and phrases: Nevanlinna theory, second main theorem, meromorphic mappings, hypersurface, subgeneral position.}

\maketitle

\section{Introduction}

\vskip0.2cm 
Let $f$ be a meromorphic mapping from $\C^m$ into $\P^n(\C)$. For each hypersurface $Q$ in $\P^n(\C)$ with $f(\C^m)\not\subset Q$, we denote by $f^*Q$ the pull back divisor of $Q$ by $f$, where $Q$ is considered as a reduced divisor in $\P^n(\C)$. As usual, we denote by $T_f(r)$ the characteristic function of $f$ with respect to the hyperplane line bundle of $\P^n(\C)$ and $N^{[M]}_{Q(f)}(r)$ the counting function of $f^*Q$ with multiplicities truncated to level $M$ (see Section 2 for the definitions).
  
Let $\{H_i\}_{i=1}^q$ be hyperplanes of $\P^n(\C)$. Let $N\geq n$ and $q\geq N+1$.
We say that the family $\{H_j\}_{j=1}^q$ are in $N$-\textit{subgeneral position} if for every subset $R\subset \{1,2, \cdots, q\}$ with the cardinality $|R|=N+1$, 
$$\bigcap_{j\in R} H_j =\emptyset.  $$
If they are in $n$-subgeneral position, we simply say that they are in {\it general position}.

In 1933, H. Cartan \cite{Ca} established a second main theorem for linearly nondegenerate meromorphic mappings and hyperplanes as follows.

\vskip0.2cm
\noindent
\textbf{Theorem A.} {\it Let $f:  {\C}^m \to \P^n(\C)$ be a linearly nondegenerate meromorphic mapping and $\{H_i\}_{i=1}^q$ be hyperplanes in general position in $\P^n(\C)$. Then we have}
$$||\ \ (q-n-1)T_f(r) \leq \sum_{i=1}^q N^{[n]}(r,f^*H_i)+ o(T_f(r)).$$
Here, by the notation ``$\| P$'' we mean that the assertion $P$ holds for all $r\in [0,\infty)$ excluding a Borel subset $E$ of the interval $[0,\infty)$ with $\int_E dr<\infty$.

The above second main theorem of H. Cartan plays an important role in Nevanlinna theory, with many applications to Algebraic or Analytic geometry. We note that in Theorem A, the mapping $f$ is assumed to be linearly nondegenerate. In the case where $f$ may be linearly nondegenerate, we need to consider $f$ as a linearly nodegenerate mapping into the smallest subspace $V$ of $\P^n(\C)$ containing the image $f(\C^m)$. However in that case, the family of hyperplanes $\{H_i|_V\}_{i=1}^q$, the restrictions of $H_i'$s to $V$, may not be in general position in $V$, but they are in subgeneral position. Thanks the notion of Nochka weight, in 1983 Nochka \cite{Noc83} gave the following second main theorem for the case where the family of hyperplanes is in subgeneral position as follows.

\vskip0.2cm
\noindent
\textbf{Theorem B} (cf. \cite{Noc83,No05}). {\it Let $f:  {\C}^m \to \P^n(\C)$ be a meromorphic mapping and $\{H_i\}_{i=1}^q$ be hyperplanes in $N-$subgeneral position in $\P^n(\C)$ $(N\ge n)$. Then we have
$$||\ \ (q-2N+n-1)T_f(r) \leq \sum_{i=1}^q N^{[n]}(r,f^*H_i)+ o(T_f(r)).$$}

\noindent
The above result is usual called Cartan-Nochka's theorem. This result will deduce the second main theorem for the case of degenerate meromorphic mappings.

Over the last few decades, there have been several results generalizing this theorem to abstract objects. Especially, it is important to generalize these results to the case where the hyperplanes are replaced by hypersurfaces. In 2004, M. Ru \cite{Ru04} firstly proved a second main theorem for algebraically nondegenerate meromorphic mappings into $\mathbb P^{n}(\mathbb C)$ intersecting hypersurfaces in general position in $\mathbb P^{n}(\mathbb C)$. With the same assumptions, T. T. H. An and H. T. Phuong \cite{AP} improved the result of M. Ru by giving an explicit truncation level for counting functions. Similarly as above, in order to consider the case where the mappings may be algebraically degenerate, we need consider such mappings as algebraically nondegenerate mappings into the smallest subvariety of $\P^n(\C)$ containing thier images. In 2009, M. Ru \cite{Ru09} proved the following result for the case where the meromorphic mappings into an arbitrary projective subvariety of $\P^n(\C)$.

\vskip0.2cm
\noindent
\textbf{Theorem C} (cf. \cite{Ru09}) {\it Let $V\subset\P^n(\C)$ be a smooth complex projective variety of dimension $k\ge 1$. Let $Q_1,...,Q_q$ be $q$ hypersurfaces in
$\P^n(\C)$ of degree $d_i$, located in general position in $V$. Let $f:\C\to V$ be an algebraically nondegenerate holomorphic map. Then, for every $\epsilon >0$,
$$||\ (q-k-1-\epsilon) T_f(r)\le\sum_{i=1}^q\frac{1}{d_i}N_{Q_i(f)}(r)+o(T_f(r)).$$}

Here, the family of hypersurfaces $\{Q_i\}_{i=1}^q$ is said to be in $N-$subgeneral position in $V$ if for any $R\subset\{1,...,q\}$ with the cardinality $|R|=N+1$,
$$\bigcap_{j\in R} Q_j\cap V =\emptyset.$$
If they are in $k$-subgeneral position, we also say that they are in {\it general position} in $V$.

We may see that Theorem C also holds for the case where $f:\C^m\to V$ without so much modification in the proof. Also, we note that the family of hypersurfaces in the above result is assumed to be in general position in $V$ and the proof does not work if this family is in subgeneral position in $V$. In particular, Theorem C does not deduce the case of degenerate mappings. The difficult comes from the fact that there is no Nochka weights for the families of hypersurfaces. Hence, it may be very difficult to generalize this result for the case of degenerate mappings. 

Recently, there are some second main theorem for the case of degenerate maps given by many authors, such as Z. Chen, M. Ru and Q. Yan \cite{CRY01,CRY02}, L. Giang \cite{LG}, S. Lei and M. Ru \cite{LR} and others. However, these results are still not yet optimal. In particular they cannot deduce Theorem C of M. Ru or the classical second main theorem of H. Cartan. 

In another direction, recently D. D. Thai, D. P. An and S. D. Quang \cite{AQT,QA} gave a generalization of the notion of Nochka weight for the case of hypersurfaces in subgeneral position. Applying that new Nochka weight, these authors gave a generalization of Theorem B to the case of  meromorphic mappings with family of hypersurfaces in subgeneral position. They proved the following.

\vskip0.2cm
\noindent
\textbf{Theorem D} (cf. \cite{QA}) {\it Let $V$ be a complex projective subvariety of $\mathbb P^n(\mathbb C)$ of dimension $k\ (k\le n)$. Let $\{Q_i\}_{i=1}^q$ be hypersurfaces of $\mathbb P^n(\mathbb C)$ in $N$-subgeneral position with respect to $V$ with $\deg Q_i=d_i\ (1\le i\le q)$. Let $d$ be the least common multiple of $d_i'$s, i.e., $d=lcm (d_1,...,d_q)$. Let $f$ be a meromorphic mapping of $\mathbb C^m$ into $V$ such that $f$ is algebraically nondegenerate. Assume that  $q>\dfrac{(2N-k+1)H_{V}(d)}{k+1}.$ Then, we have
$$ \biggl |\biggl |\ \left (q-\dfrac{(2N-k+1)H_{V}(d)}{k+1}\right )T_f(r)\le \sum_{i=1}^{q}\dfrac{1}{d_i}N^{[H_{V}(d)-1]}_{Q_i(f)}(r)+o(T_f(r)),$$
where $H_V(d)$ is the Hilbert function of $V$.}

\vskip0.2cm 
Unfortunately, even this result generalizes the Cartan-Nochka's theorem, but in the case of  $d>1$, the total defect obtained is $\dfrac{(2N-k+1)H_{V}(d)}{k+1}$, which is more bigger than $(2N-k+1)$. The reason here is that the Nochka weight is still not generalized completely.

Our purpose in this paper is to generalize the above result of M. Ru and extend the result of H. Cartan to the case of meromorphic mappings into projective varieties with hypersurfaces in subgeneral position. We will consider the case where the mapping $f$ into a subvariety $V$ of dimension $k$ in $\P^n(\C)$ is algebraically nondegenerate and the family of $q$ hyperplanes $\{Q_i\}_{i=1}^q$ is in $N$-subgeneral position in $V$.  Our main idea to avoid using the Nochka weights is that: each time when we estimate the auxiliary functions, which deduces the second main theorem (see Lemma \ref{3.1} and (\ref{3.2}) for detail), we will replace $k+1$ hypersurfaces from $N+1$ hypersurfaces by $k+1$ new other hypersurfaces in general position in $V$ so that this process does not change the estimate. Moreover, in our result, we will give an explicit truncation level for the counting functions. Namely we will prove the following.

\begin{theorem}\label{1.1} Let $V\subset\P^n(\C)$ be a smooth complex projective variety of dimension $k\ge 1$. Let $Q_1,...,Q_q$ be hypersurfaces in $\P^n(\C)$ of degree $d_i$, located in $N-$subgeneral position in $V$. Let $d$ be the least common multiple of $d_1,...,d_q$, i.e., $d=lcm(d_1,...,d_q)$. Let $f:\C^m\to V$ be an algebraically nondegenerate meromorphic mapping. Then, for every $\epsilon >0$, 
$$\bigl |\bigl |\ \left (q-(N-k+1)(k+1)-\epsilon\right) T_f(r)\le\sum_{i=1}^q\frac{1}{d_i}N^{[M_0]}_{Q_i(f)}(r)+o(T_f(r)),$$
where $M_0=\left[\deg (V)^{k+1}e^kd^{k^2+k}p^k(2k+4)^kl^k\epsilon^{-k}\right]$ with $p=N-k+1$ and $l=(k+1)\cdot q!$
\end{theorem}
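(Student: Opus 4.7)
The plan is to adapt M. Ru's filtration/Wronskian proof of Theorem C to the subgeneral position setting, exploiting the substitution idea flagged in the introduction to compensate for the absence of Nochka weights for hypersurfaces. First, I would perform two standard reductions: (i) replace each $Q_i$ by $Q_i^{d/d_i}$ so that all hypersurfaces have common degree $d$, paying a factor $d$ in the truncation level; and (ii) outside a thin exceptional set, isolate at each $z\in\C^m$ the subset $R(z)\subset\{1,\dots,q\}$ of $N+1$ indices minimizing $|Q_i(f(z))|/\|f(z)\|^d$. The $N$-subgeneral position hypothesis guarantees that $\bigcap_{i\in R(z)}Q_i\cap V=\emptyset$, so a Hilbert--Nullstellensatz estimate controls $\sum_{i\notin R(z)}\log(\|f\|^d/|Q_i(f)|)$ by an $O(1)$ term. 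Ordering $R(z)=\{i_1,\dots,i_{N+1}\}$ with $|Q_{i_1}(f(z))|\le\cdots\le|Q_{i_{N+1}}(f(z))|$, the elementary monotone comparison
$$\sum_{j=1}^{N+1}\log\frac{\|f(z)\|^d}{|Q_{i_j}(f(z))|}\le (N-k+1)\sum_{j=1}^{k+1}\log\frac{\|f(z)\|^d}{|Q_{i_j}(f(z))|}$$
reduces the problem to estimating only the $k+1$ smallest indices, at the cost of a multiplicative factor $p=N-k+1$.

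The central new ingredient is a \emph{replacement lemma}: for each subset $R\subset\{1,\dots,q\}$ with $|R|=N+1$ and $\bigcap_{i\in R}Q_i\cap V=\emptyset$, one produces $k+1$ linear combinations $\tilde Q_1^R,\dots,\tilde Q_{k+1}^R$ of the $Q_i$ $(i\in R)$, with coefficient vectors drawn from a finite set of cardinality $\le l=(k+1)\cdot q!$, such that $\tilde Q_1^R,\dots,\tilde Q_{k+1}^R$ lie in general position on $V$ and
$$\prod_{j=1}^{k+1}\frac{\|f(z)\|^d}{|\tilde Q_j^R(f(z))|}\ge c\prod_{j=1}^{k+1}\frac{\|f(z)\|^d}{|Q_{i_j}(f(z))|}$$
on the open cell where $R(z)=R$ with the above ordering. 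The lemma is proved by induction on $N-k$: starting from $N+1$ hypersurfaces whose common zero set in $V$ is empty, one successively forms linear combinations that drop the common zero locus on $V$ by one dimension at each stage, arriving at $k+1$ hypersurfaces in general position on $V$; the coefficient vectors can be selected from a fixed finite family parametrised by permutations of the $q$ hypersurfaces, whence the factor $q!$.

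With the replacement lemma in hand, the problem reduces to the Ru-type estimate for $k+1$ hypersurfaces $\tilde Q_1^R,\dots,\tilde Q_{k+1}^R$ in general position on $V$. I would apply Ru's filtration construction to $H^0(V,\O_V(Ld))$ for a large integer $L$: take a basis $\phi_1,\dots,\phi_{H_V(Ld)}$ adapted to the filtration by orders of vanishing along the $\tilde Q_j^R$'s, form the Wronskian $W=W(\phi_1\circ f,\dots,\phi_{H_V(Ld)}\circ f)$, control $W$ from above via the logarithmic derivative lemma, and use the standard bound on the zero divisor of $W$ in terms of truncated counting functions $N^{[M_0]}_{Q_i(f)}(r)$. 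The filtration analysis produces the main coefficient $k+1$ in front of $\sum_{j=1}^{k+1}\log(\|f\|^d/|Q_{i_j}(f)|)$, which combined with the factor $p=N-k+1$ from the first paragraph gives the total coefficient $(N-k+1)(k+1)$; choosing $L$ of order $\epsilon^{-1}$ absorbs the filtration error into $\epsilon T_f(r)$.

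The most delicate part is the bookkeeping for the truncation level $M_0$. The factor $\deg(V)^{k+1}d^{k^2+k}$ will come from the bound $H_V(Ld)\le \deg(V)(Ld)^k/k!$ and the degree of the Wronskian; the factors $e^k(2k+4)^k$ from the standard estimate on the zero order of the Wronskian; $p^k l^k$ from iterating the replacement lemma over all possible $R$ and orderings; and $\epsilon^{-k}$ from the required size of $L$. The main obstacle will be the replacement lemma itself, namely producing a finite family of coefficient vectors of cardinality bounded by $(k+1)\cdot q!$ \emph{uniformly} over all subsets $R$ and orderings; once this combinatorial/algebraic statement is in place, the rest of the proof tracks Ru's original scheme with the substituted hypersurfaces in each cell.
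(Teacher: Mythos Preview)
Your overall architecture---reduce to common degree, order the values $|Q_i(\tilde f)(z)|$, replace $N+1$ hypersurfaces by $k+1$ in general position on $V$, then run a Ru-type estimate---matches the paper's, and your replacement lemma is precisely Lemma~\ref{3.1}. Two points need correction.

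First, the pointwise replacement inequality you claim cannot be achieved. For the $\tilde Q_j^R$ to lie in general position on $V$ one must use linear combinations reaching beyond $Q_{i_1},\dots,Q_{i_{k+1}}$ (since $N$-subgeneral position does not force $\bigcap_{j\le k+1}Q_{i_j}\cap V=\emptyset$); with the construction of Lemma~\ref{3.1}, namely $P_{\cdot,t}=\sum_{j=2}^{N-k+t}c_{tj}Q_{i_j}$, one only gets $|P_{\cdot,t}(\tilde f(z))|\le B\,|Q_{i_{N-k+t}}(\tilde f(z))|$ on the ordered cell, which goes the wrong way for your comparison with $\prod_{j\le k+1}|Q_{i_j}(\tilde f(z))|^{-1}$. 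The paper does not split the reduction into ``monotone step then replacement''; it derives inequality~(\ref{3.2}) in one chain, bounding $\prod_{j\le N-k+1}|Q_{i_j}|^{-1}$ by $|P_{\cdot,1}|^{-(N-k+1)}$ and the remaining $k$ factors by $\prod_{t\ge 2}|P_{\cdot,t}|^{-1}$, then padding with $|P_{\cdot,t}|\le C\|\tilde f\|^d$ to reach $\prod_{t=1}^{k+1}|P_{\cdot,t}|^{-(N-k+1)}$. This is fixable, but your two-step decomposition as written does not go through.

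Second, and this is the real gap: you propose to run the Ru-type estimate as a Corvaja--Zannier filtration on $H^0(V,\O_V(Ld))$ adapted, cell by cell, to the $k+1$ replacement hypersurfaces. That is the method of Theorem~\ref{1.3}, and it relies on the explicit quotient dimensions $d^n$ of Lemma~\ref{3.19}, which are available only when $V=\P^n(\C)$. For a general smooth $V$ no such formula is at hand, and the paper takes a genuinely different route: it embeds $V$ once into $\P^{l-1}(\C)$ via the \emph{entire} collection of $l=(k+1)q!$ replacement hypersurfaces $P_{i,j}$ (all permutations simultaneously), obtaining a single image variety $Y$ of degree $\Delta\le d^k\deg V$ and a single linearly nondegenerate map $F$ into $\P^{H_Y(u)-1}(\C)$, to which Theorem~\ref{2.4} is applied. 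The substitute for the filtration estimate is the Evertse--Ferretti inequality between Hilbert weights and Chow weights of $Y$ (Theorem~\ref{2.12}) combined with Lemma~\ref{2.13}, which at each $z$ singles out the $k+1$ coordinates of $\P^{l-1}(\C)$ corresponding to the permutation $i_0$ determined by the ordering at $z$; the same machinery is used a second time to bound $\nu^0_{W(\tilde F)}$ from below and produce the truncation. The truncation level is $n_u=H_Y(u)-1$, and the factor $l^k$ in $M_0$ arises from the size of $u$ needed to absorb the Evertse--Ferretti error term summed over the $l$ coordinates, not from ``iterating the replacement lemma''. Without the Hilbert/Chow weight machinery your filtration step does not go through for arbitrary $V$.
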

\noindent
Here, by the notation $[x]$ we denote the biggest integer which does not exceed the real number $x$.

Then, we see that, if the family of hypersurfaces is in general position, i.e. $N=k$, our result will imply Theorem C, and it also is an extension of the classical result of H. Cartan. Moreover,  since $(n-k+1)(k+1)\le (\frac{n}{2}+1)^2$ for every $1\le k\le n$, from the above theorem we immediately have the following corollary, which is a degeneracy second main theorem.

\begin{corollary}
Let $f:\C^m\to \P^n(\C)$ be a meromorphic mapping and let $Q_1,...,Q_q$ be hypersurfaces in $\P^n(\C)$ of degree $d_i$, located in general position. Then, for every $\epsilon >0$, there exists a positive integer $M$ such that
$$\biggl |\biggl |\ \left(q-\left (\frac{n}{2}+1\right)^2-\epsilon\right)T_f(r)\le\sum_{i=1}^q\frac{1}{d_i}N^{[M]}_{Q_i(f)}(r)+o(T_f(r)).$$
\end{corollary}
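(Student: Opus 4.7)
The plan is to reduce the corollary to Theorem \ref{1.1} by passing to the smallest projective subvariety of $\P^n(\C)$ containing the image of $f$, and then applying an elementary AM--GM type inequality to control the defect constant.

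First I would let $V\subset\P^n(\C)$ be the smallest projective subvariety containing $f(\C^m)$, and set $k=\dim V$. Then $f:\C^m\to V$ is, by construction, algebraically nondegenerate. The restricted hypersurfaces $Q_i\cap V$ need not be in general position in $V$, but since $\{Q_i\}_{i=1}^q$ is in general position in $\P^n(\C)$, any $n+1$ of them have empty intersection in $\P^n(\C)$, hence their restrictions to $V$ have empty common intersection in $V$. Thus $\{Q_i\}_{i=1}^q$ is in $N$-subgeneral position in $V$ with $N=n$.

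Next I would apply Theorem \ref{1.1} to $f:\C^m\to V$ with $N=n$, obtaining, for every $\epsilon>0$,
$$\bigl\|\ \left(q-(n-k+1)(k+1)-\epsilon\right)T_f(r)\le \sum_{i=1}^q\frac{1}{d_i}N^{[M_0]}_{Q_i(f)}(r)+o(T_f(r)),$$
for the explicit truncation level $M_0$ provided there (which depends on $\deg V$, $k$, $d$, and $\epsilon$, all of which are fixed once $V$ is fixed). By AM--GM applied to the two nonnegative factors $n-k+1$ and $k+1$, whose sum equals $n+2$, we get
$$(n-k+1)(k+1)\le\Bigl(\tfrac{(n-k+1)+(k+1)}{2}\Bigr)^2=\Bigl(\tfrac{n}{2}+1\Bigr)^2,$$
for every $1\le k\le n$. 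Setting $M:=M_0$ then yields the stated inequality of the corollary.

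I do not expect any serious obstacle: the entire argument is a reduction to Theorem \ref{1.1} together with a one-line numerical estimate. The only subtlety worth checking carefully is that subgeneral position for $\{Q_i\}$ in $V$ really follows from general position in $\P^n(\C)$ (which it does, by intersecting with $V$), and that the constant $M_0$ furnished by Theorem \ref{1.1} genuinely depends only on the geometric data attached to $V$ and on $\epsilon$, so that it can legitimately be called a positive integer $M$ in the statement of the corollary.
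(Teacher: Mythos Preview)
Your proposal is correct and matches the paper's own argument: the paper simply remarks that $(n-k+1)(k+1)\le\bigl(\tfrac{n}{2}+1\bigr)^2$ for every $1\le k\le n$ and says the corollary follows immediately from Theorem~\ref{1.1}, which is precisely the reduction (pass to the smallest subvariety $V$ containing $f(\C^m)$, note the $Q_i$ are then in $n$-subgeneral position in $V$, and apply AM--GM) that you spell out. The only point you make explicit that the paper leaves implicit is the passage to $V$ and the verification of subgeneral position; both are handled correctly.
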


Moreover, for the case $V=\P^n(\C)$, we will give another proof for the second main theorem to get better truncation level for counting functions. Our last result in this paper is stated as follows.
\begin{theorem}\label{1.3}
Let $f:\C^m\to \P^n(\C)$ be an algebraically nondegenerate meromorphic mapping and let $Q_1,...,Q_q$ be $q$ hypersurfaces in $\P^n(\C)$ of degree $d_i$, located in $N$-subgeneral position. Let $d$ be the least common multiple of all $d_i'$s, i.e., $d=lcm (d_1,...,d_q)$. Then, for every $\epsilon >0$, 
$$||\ (q-p(n+1)-\epsilon)T_f(r)\le\sum_{i=1}^q\frac{1}{d_i}N^{[M_0]}_{Q_i(\tilde f)}(r)+o(T_f(r)),$$
where $p=N-n+1$ and $M_0=4\left (edp(n+1)^2I(\epsilon^{-1})\right )^n-1$. 
\end{theorem}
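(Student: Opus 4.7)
The plan is to adapt the argument used for Theorem \ref{1.1} to the special case $V=\P^n(\C)$, exploiting the fact that for projective space the Hilbert function is simply $\binom{n+m}{n}$ to construct a more efficient filtration and thereby lower the truncation level from the generic $d^{k^2+k}$ dependence down to $d^n$. The coefficient $p(n+1)=(N-n+1)(n+1)$ is what Theorem \ref{1.1} already yields when $k=n$ and $\deg V=1$, so only the truncation level needs to be sharpened.

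First I would reduce, as in the proof of Theorem \ref{1.1}, to the common-degree setting by replacing each $Q_i$ with $Q_i^{d/d_i}$; it then suffices to establish
\[
\bigl\|\ (q-p(n+1)-\epsilon)T_f(r)\le\tfrac{1}{d}\sum_{i=1}^q N^{[M_0]}_{Q_i(\tilde f)}(r)+o(T_f(r))
\]
for hypersurfaces of a common degree $d$, where $\tilde f$ is a reduced representation of $f$. Next, at each point $z$ I order the indices so that $|Q_1(f(z))|\le\cdots\le|Q_q(f(z))|$ and invoke the key combinatorial step used in Theorem \ref{1.1}: $N$-subgeneral position guarantees that among the first $N+1$ indices there exists a subset $J(z)$ of cardinality $n+1$ with $\{Q_i\}_{i\in J(z)}$ in general position in $\P^n(\C)$. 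This permits bounding $\sum_{i=1}^q\log(\|\tilde f(z)\|^d/|Q_i(\tilde f(z))|)$ by $p$ times the corresponding sum over $i\in J(z)$, up to a bounded error, which is the mechanism producing the factor $p$ in $p(n+1)$.

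For each fixed $J\subset\{1,\dots,q\}$ of size $n+1$ in general position, I then apply the Ru--Vojta-style filtration of $H^0(\P^n(\C),\O(md))$ by the ideals generated by monomials in $\{Q_i\}_{i\in J}$. Picking a basis of $H^0(\P^n(\C),\O(md))$ adapted to this filtration and forming the associated Wronskian of the basis evaluated on $\tilde f$, one obtains a pointwise inequality comparing the proximity sum over $i\in J$ with the logarithmic singularities of the Wronskian. Because the Hilbert polynomial of $\P^n$ is $\binom{n+m}{n}$, only one factor of $d$ enters per $m$-direction, which is exactly what pushes the exponent in $M_0$ from $k^2+k$ down to $n$. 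I would then multiply these estimates over all admissible $J$ with the product-over-subsets trick (as in the proof of Theorem \ref{1.1}), apply the logarithmic derivative lemma to the Wronskian, and integrate.

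The main obstacle is the careful optimization at the end. I must pick the integer parameter $m$ (of size roughly $dp(n+1)^2I(\epsilon^{-1})$) so that simultaneously the residual slack from the filtration argument is absorbed into $\epsilon T_f(r)$ and the Wronskian contribution, whose zero order at a point is bounded by a polynomial in $m$ of degree $n$, produces exactly the explicit truncation $M_0=4\bigl(edp(n+1)^2I(\epsilon^{-1})\bigr)^n-1$. Tracking every constant through the filtration--Wronskian machinery, while preserving the sharp factor $(n+1)$ in front of the coefficient of $T_f(r)$, is the technical heart of the argument.
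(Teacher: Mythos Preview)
Your overall architecture matches the paper's: reduce to common degree, pass from the $N+1$ smallest values to $n+1$ ``good'' forms at a cost of the factor $p=N-n+1$, build the Corvaja--Zannier filtration of $V_u=\C[x_0,\dots,x_n]_u$ with respect to those forms, compare with a single Wronskian, and then choose $u$ of order $dp(n+1)^3I(\epsilon^{-1})$ so that simultaneously $pm_uu/(db)\le p(n+1)+\epsilon$ and $m_u-1\le M_0$.

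There is, however, a genuine gap at the reduction step. You assert that $N$-subgeneral position guarantees, among the first $N+1$ hypersurfaces, a \emph{subset} $J(z)$ of size $n+1$ in general position, and you describe this as ``the key combinatorial step used in Theorem~\ref{1.1}''. That is not what Theorem~\ref{1.1} does, and the claim itself is false for degree $d\ge 2$. (It is true for hyperplanes: if $N+1$ linear forms have no common zero they span $(\C^{n+1})^*$, so some $n+1$ are independent. But take four generic smooth conics $Q_1,\dots,Q_4$ in $\P^2$ with $Q_i$ passing through three fixed points $p_j$, $j\ne i$, and avoiding $p_i$; every triple meets at the remaining $p_j$, yet $\bigcap_{i=1}^4 Q_i=\emptyset$, so they are in $3$-subgeneral position while no three of them are in general position.) The paper's actual mechanism is Lemma~\ref{3.1}: for each ordering $I_{i_0}$ one builds \emph{linear combinations} $P_{i_0,t}=\sum_{j=2}^{N-n+t}c_{tj}Q_{I_{i_0}(j)}$ (with $P_{i_0,1}=Q_{I_{i_0}(1)}$) that \emph{are} in general position. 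The triangular shape of these combinations is what gives both the pointwise bound \eqref{3.18} with the factor $N-n+1$ and, on the divisor side, the inequalities $\nu^0_{P_{1,j}(\tilde f)}(z)\ge\nu^0_{Q_{N-n+j}(\tilde f)}(z)$ needed to convert the Wronskian estimate back into truncated counting functions of the original $Q_i$. If you replace your hypothetical subset $J(z)$ by the $P_{i_0,j}$ coming from Lemma~\ref{3.1}, the remainder of your outline becomes essentially the paper's proof.
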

Here, by the notation $I(x)$ we denote the smallest integer number which is not smaller than the real number $x$. We see that the truncation level $M_0$  obtained in Theorem \ref{1.3} is much smaller than that in Theorem \ref{1.1}. Therefore, this result may will be more helpful to study problems which are related to truncating multiplicity of the intersections between meromorphic mappings and hypersurfaces. We would also like to emphasize that our above result implies the second main theorem of T. T. H. An and H. T. Phuong \cite{AP} for the case of $N=n$ with the a better truncation level.

{\bf Acknowledgements.} This research is funded by Vietnam National Foundation for Science and Technology Development (NAFOSTED) under grant number 101.04-2015.03.

\section{Basic notions and auxiliary results from Nevanlinna theory}

\noindent
{\bf 2.1 Characteristic function.}\ Set $||z|| = \big(|z_1|^2 + \dots + |z_m|^2\big)^{1/2}$ for
$z = (z_1,\dots,z_m) \in \C^m$ and define $B(r) := \{ z \in \C^m : ||z|| < r\},\quad S(r) := \{ z \in \C^m : ||z|| = r\}\ (0<r<\infty).$
Define
$$v_{m-1}(z) := \big(dd^c ||z||^2\big)^{m-1}\quad \quad \text{and}$$
$$\sigma_m(z):= d^c \log||z||^2 \land \big(dd^c\log||z||^2\big)^{m-1}
 \text{on} \quad \C^m \setminus \{0\}.$$

Let $f : \mathbb C^m \to \P^n(\C)$ be a meromorphic mapping. Let $\tilde f = ( f_0,\ldots, f_n)$ be a reduced representation of  $f,$  where $f_0,\ldots, f_n$ are holomorphic functions on $\mathbb C^m$ such that $I(f)=\{f_0=\cdots =f_n\}$ is  an analytic subset of codimension at least two of $\C^m$. The characteristic function of $f$ (with respect to the hyperplane line bundle of $\P^n(\C)$), denoted by $T_f (r)$, is defined by
$$ T_f (r)=\int_1^{r}\frac{dt}{t}\int_{B(t)}f^*\Omega\wedge v_{m-1},$$
where $B(t)=\{z\in\C^m\ ;\ ||z||<t\}$ and $\Omega$ is the Fubini-Study form on $\P^n(\C)$. By Jensen's formula, we have
$$ T_f(r)=\int_{S(r)}\log ||\tilde f||\sigma_m-\int_{S(1)}\log ||\tilde f||\sigma_m,$$
where $||\tilde f||=(|f_0|^2+\cdots |f_n|^2)^{\frac{1}{2}}$.

\vskip0.2cm 
\noindent
{\bf 2.2 Counting function.}\ Fix $(\omega_0:\cdots :\omega_n)$ be a homogeneous coordinate system on $\P^n(\C)$. Let $Q$ be a hypersurface in $\P^n(\C)$ of degree $d$. Thoughout this paper, we sometimes identify a hypersurface with the defining polynomial if there is no confusion.  Then we may write
$$ Q(\omega)=\sum_{I\in\mathcal T_d}a_I\omega^I, $$
where $\mathcal T_d=\{(i_0,...,i_n)\in\mathbb Z_{\ge 0}^{n+1}\ ;\ i_0+\cdots +i_n=d\}$, $\omega =(\omega_0,...,\omega_n)$, $\omega^I=\omega_0^{i_0}...\omega_n^{i_n}$ with $I=(i_0,...,i_n)\in\mathcal T_d$ and $a_I\ (I\in\mathcal T_d)$ are constants, not all zeros. Here $\Z_{\ge 0}$ denotes the set consist of all $(n+1)$-tuples of non negartive integers.  In the case $d=1$, we call $Q$ a hyperplane of $\P^n(\C)$.

Now for a divisor $\nu$ on $\mathbb C^m$ and for a positive integer $M$ or $M= \infty$, we set
$$\nu^{[M]}(z)=\min\ \{M,\nu(z)\},$$
and define
\begin{align*}
n(t) =
\begin{cases}
\int\limits_{|\nu|\,\cap B(t)}
\nu(z) \alpha^{m-1} & \text  { if } m \geq 2,\\
\sum\limits_{|z|\leq t} \nu (z) & \text { if }  m=1. 
\end{cases}
\end{align*}
Similarly, we define \quad $n^{[M]}(t).$ The counting function of $\nu$ is defined by
$$ N(r,\nu)=\int\limits_1^r \dfrac {n(t)}{t^{2m-1}}dt \quad (1<r<\infty).$$
Similarly, we define  \ $N(r,\nu^{[M]})$ and denote it by \ $N^{[M]}(r,\nu)$. 

If $\varphi$ is a nonzero meromorphic function on $\C^m$, we denote by $\nu^0_{\varphi}$ (resp. $\nu^\infty_\varphi$)the zero divisor (resp. pole divisor) of $\varphi$. We will denote by $N_{\varphi}(r)$ (resp. $N^{[M]}_{\varphi}(r)$) the counting function $N(r,\nu^0_\varphi)$ (resp. $N^{[M]}(r,\nu^0_\varphi)$).

\vskip0.2cm 
\noindent
{\bf 2.3 Proximity function.}\ Let $f$ and $Q$ be as above. The proximity function of $f$ with respect to $Q$, denoted by $m_f (r,Q)$, is defined by
$$m_f (r,Q)=\int_{S(r)}\log\frac{||\tilde f||^d}{|Q(\tilde f)|}\sigma_m-\int_{S(1)}\log\frac{||\tilde f||^d}{|Q(\tilde f)|}\sigma_m,$$
where $Q(\tilde f)=Q(f_0,...,f_n)$. This definition is independent of the choice of the reduced representation of $f$. 

\vskip0.2cm 
\noindent
{\bf 2.4 The first main theorem.}\ We denote by $f^*Q$ the pullback of the divisor $Q$ by $f$. We may see that $f^*Q$ identifies with the zero divisor $\nu^0_{Q(\tilde f)}$ of the function $Q(\tilde f)$. By Jensen's fomular, we have
$$N(r,f^*Q)=N_{Q(\tilde f)}(r)=\int_{S(r)}\log |Q(\tilde f)|\sigma_m-\int_{S(1)}\log |Q(\tilde f)| \sigma_m.$$
For convenience, we will denote $N_{Q(f)}(r)=N(r,f^*Q)$.

Then the first main theorem in Nevanlinna theory for meromorphic mappings and hypersurfaces is stated as follows.

\begin{theorem}[First Main Theorem] Let $f : \mathbb C^m \to \P^n(\C)$ be a holomorphic map, and let $Q$ be a hypersurface in $\P^n(\C)$ of degree $d$. If $f (\mathbb C^m) \not \subset Q$, then for every real number $r$ with $0 < r < +\infty$,
$$dT_f (r)=m_f (r,Q) + N_{Q(f)}(r)+O(1),$$
where $O(1)$ is a constant independent of $r$.
\end{theorem}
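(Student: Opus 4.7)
The plan is to deduce the identity directly from the definitions in §2.1--§2.4 via a pointwise decomposition of $\log\|\tilde f\|^d$ and a single application of Jensen's formula. The key observation is that the hypothesis $f(\C^m)\not\subset Q$ ensures $Q(\tilde f)\not\equiv 0$, so that $Q(\tilde f)$ is a nonzero entire holomorphic function on $\C^m$. This lets us apply Jensen's formula to $Q(\tilde f)$ exactly as the paper does just above the statement when it identifies $N(r,f^*Q)$ with $\int_{S(r)}\log|Q(\tilde f)|\sigma_m-\int_{S(1)}\log|Q(\tilde f)|\sigma_m$.

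Concretely, I would start from the Jensen-form expression for the characteristic function stated in §2.1,
$$T_f(r)=\int_{S(r)}\log\|\tilde f\|\sigma_m-\int_{S(1)}\log\|\tilde f\|\sigma_m,$$
multiply by $d$, and observe the trivial pointwise decomposition (valid off the zero set of $Q(\tilde f)$, which has measure zero on $S(r)$ for almost every $r$)
$$d\log\|\tilde f\|=\log|Q(\tilde f)|+\log\frac{\|\tilde f\|^d}{|Q(\tilde f)|}.$$
Integrating against $\sigma_m$ over $S(r)$ and $S(1)$ and subtracting, I would split the result into two pieces. The second piece is by definition $m_f(r,Q)$ (see §2.3). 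The first piece is, by the Jensen formula applied to the entire holomorphic function $Q(\tilde f)$ (as recorded in §2.4), precisely $N(r,\nu^0_{Q(\tilde f)})=N_{Q(f)}(r)$. Assembling these pieces gives $dT_f(r)=m_f(r,Q)+N_{Q(f)}(r)$, so the $O(1)$ is harmless.

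The last thing to verify, and essentially the only subtle point, is independence of the choice of reduced representation: replacing $\tilde f$ by $u\tilde f$ for a nowhere-vanishing holomorphic $u$ adds $d\log|u|$ to $\log\|\tilde f\|^d$ and adds $\log|u|^d$ to $\log|Q(\tilde f)|$ while leaving the ratio $\|\tilde f\|^d/|Q(\tilde f)|$ unchanged. Since $u$ has no zeros or poles on $\C^m$, Jensen's formula gives $\int_{S(r)}\log|u|\sigma_m-\int_{S(1)}\log|u|\sigma_m=0$, so the identity is well posed and the $O(1)$ can absorb any bookkeeping constant that arises from the choice of Fubini--Study normalization or the comparison $|Q(\tilde f)|\le C\|\tilde f\|^d$ (where $C$ depends only on the coefficients of $Q$). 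There is no real obstacle here: the entire argument is a one-step reduction to Jensen's formula applied to $Q(\tilde f)$, which the paper has already flagged as available.
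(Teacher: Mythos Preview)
The paper does not actually supply a proof of this statement: the First Main Theorem is recorded in \S2.4 as a standard preliminary result, stated without argument, immediately after the Jensen-formula identity for $N_{Q(f)}(r)$. Your proposal is correct and is exactly the standard derivation one would give---combining the Jensen expression for $T_f(r)$ in \S2.1, the definition of $m_f(r,Q)$ in \S2.3, and the Jensen expression for $N_{Q(f)}(r)$ in \S2.4 via the pointwise splitting $d\log\|\tilde f\|=\log|Q(\tilde f)|+\log(\|\tilde f\|^d/|Q(\tilde f)|)$. With the paper's normalizations (all three quantities are defined as differences of integrals over $S(r)$ and $S(1)$), you even obtain the identity on the nose, so the $O(1)$ is indeed superfluous, as you observe.
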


\vskip0.2cm 
\noindent
{\bf 2.5 Nevanlinna's functions.}\ Let $\varphi$ be a nonzero meromorphic function on $\mathbb C^m$, which is occasionally regarded as a meromorphic map into $\mathbb P^1(\mathbb C)$. The proximity function of $\varphi$ is defined by
$$m(r,\varphi)=\int_{S(r)}\log \max\ (|\varphi|,1)\sigma_m.$$
The Nevanlinna's characteristic function of $\varphi$ is define as follows
$$ T(r,\varphi)=N_{\frac{1}{\varphi}}(r)+m(r,\varphi). $$
Then 
$$T_\varphi (r)=T(r,\varphi)+O(1).$$
The function $\varphi$ is said to be small (with respect to $f$) if $||\ T_\varphi (r)=o(T_f(r))$.
%Here, by the notation ``$|| \ P$''  we mean the assertion $P$ holds for all $r \in [0,\infty)$ excluding a Borel subset $E$ of the interval $[0,\infty)$ with $\int_E dr<\infty$.

\vskip0.2cm 
\noindent
{\bf 2.6 Some lemmas.} The following play essential roles in Nevanlinna theory.

For each meromorphic function $g$ on $\C^m$ and  an $p$-tuples $\alpha =(\alpha_1,...,\alpha_m)\in\Z^m_{\ge 0}$, we set $|\alpha|=\sum_{i=1}^m\alpha_i$ and define 
$${\mathcal D}^{\alpha}g=\frac{\partial^{|\alpha|}g}{\partial^{\alpha_{1}}z_1...\partial^{\alpha_{m}}z_m}.$$
We have the lemma on logarithmic derivative stated as follows.
\begin{lemma}[{see \cite{NO}}]
Let $f$ be a nonzero meromorphic function on $\C.$ Then for all positive integer $k$,
$$\biggl|\biggl|\quad m\biggl(r,\dfrac{\mathcal D^{\alpha}(f)}{f}\biggl)=O(\log^+T(r,f))\ (\alpha\in \mathbb Z^m_+).$$
\end{lemma}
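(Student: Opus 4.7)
The plan is to reduce the several-variable statement to the classical one-variable lemma on the logarithmic derivative. Note first that although the statement says ``on $\C$'', the multi-index $\alpha \in \Z^m_{\ge 0}$ and the differential operator $\mathcal{D}^\alpha$ make sense only on $\C^m$, so I read the assertion as: for any nonzero meromorphic function $f$ on $\C^m$ and any $\alpha\in\Z^m_{\ge 0}$,
$$\bigl\|\ m\!\left(r,\tfrac{\mathcal{D}^\alpha f}{f}\right)=O(\log^+ T(r,f)).$$

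\textbf{Step 1: Reduction to first order.} Writing $\mathcal{D}^\alpha$ as a composition of $|\alpha|$ first-order operators $\partial_{i_1}\cdots\partial_{i_{|\alpha|}}$, and using the telescoping identity
$$\frac{\mathcal{D}^\alpha f}{f}=\prod_{j=1}^{|\alpha|}\frac{\partial_{i_j}\bigl(\partial_{i_{j+1}}\cdots\partial_{i_{|\alpha|}} f\bigr)}{\partial_{i_{j+1}}\cdots\partial_{i_{|\alpha|}} f},$$
together with the subadditivity $m(r,gh)\le m(r,g)+m(r,h)$, it suffices to prove the bound $m\!\left(r,\partial_i g/g\right)=O(\log^+T(r,g))$ for a single first-order partial derivative $\partial_i g$ of an arbitrary meromorphic $g$, and then to control $T(r,\partial_i^{(\cdot)}f)$ by $T(r,f)$ up to $O(\log r)$ terms via the standard inequality $T(r,\partial_i g)\le (1+o(1))T(r,g)+O(\log r)$ outside an exceptional set of finite measure.

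\textbf{Step 2: One-variable lemma.} For $m=1$ this is the classical Nevanlinna lemma on the logarithmic derivative. The proof uses the Poisson--Jensen formula to represent $\log|g(z)|$ on a disc as a boundary integral minus contributions from zeros and poles; differentiation then yields a pointwise bound for $|g'(z)/g(z)|$ in terms of $\int_0^{2\pi}\log^+|g(re^{i\theta})|d\theta$ and sums $\sum_\nu 1/|z-a_\nu|$ over the zeros/poles $a_\nu$ in the disc. Integrating $\log^+$ of this bound on $S(r)$ and using the elementary estimate $\int_0^{2\pi}\log^+\!\frac{1}{|re^{i\theta}-a|}d\theta=O(1)$ for $|a|<r$ produces $m(r,g'/g)=O(\log^+T(r,g)+\log r)$ outside a set of finite linear measure.

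\textbf{Step 3: Slicing from $\C$ to $\C^m$.} For $m\ge 2$, I would use the Stoll-type integral representation
$$\int_{S(r)}\varphi(z)\sigma_m=\int_{z\in S(1)}\sigma_1(z)\cdot\frac{1}{2\pi}\int_0^{2\pi}\varphi(re^{i\theta}z)\,d\theta,$$
applied to $\varphi=\log^+|\partial_i f/f|$. For fixed $z\in S(1)$ consider the slice $f_z(u):=f(uz)$, a meromorphic function of one variable $u\in\C$. The chain rule gives $f_z'(u)=\sum_{j}z_j(\partial_j f)(uz)$, so $\partial_i f/f$ along the line through $z$ can be expressed as a finite linear combination of $f_z'/f_z$ together with similar slice-ratios (obtained by rotating the frame so that $z$ is an axis direction). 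Applying the one-variable lemma (Step 2) to each $f_z$, integrating in $\theta$ for fixed $z$, and then integrating in $z$ over $S(1)$ — using concavity of $\log^+$ and the fact that $\int_{S(1)}T(r,f_z)\sigma_1(z)=T(r,f)+O(1)$ — yields the desired multi-variable estimate $m(r,\partial_i f/f)=O(\log^+T(r,f))$.

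\textbf{Main obstacle.} The genuinely delicate point is the slicing step: one must express $\partial_i f/f$ (an intrinsic coordinate derivative) in terms of the radial derivatives $f_z'/f_z$ that the one-variable lemma controls. This requires either a unitary change of coordinates for each direction $i$ or an averaging trick on the unitary group $U(m)$. Everything else — subadditivity of $m(r,\cdot)$, the telescoping identity in Step 1, and the bookkeeping of exceptional sets of finite measure — is routine. For the detailed execution I would follow Noguchi--Ochiai's book \cite{NO}, as cited.
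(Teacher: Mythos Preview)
The paper does not give its own proof of this lemma; it is stated with a citation to Noguchi--Ochiai \cite{NO} and used as a black box throughout. There is therefore nothing in the paper to compare your argument against. Your outline (telescoping to reduce to first order, the classical one-variable Poisson--Jensen argument, then a slicing/averaging step to pass to $\C^m$) is a standard and viable route, and you correctly note that the hypothesis should read ``on $\C^m$'' rather than ``on $\C$''. Two small remarks: in Step~1 the bound $T(r,\partial_i g)\le (1+o(1))T(r,g)$ is not quite what one gets---the clean inequality is $T(r,\partial_i g)\le 2T(r,g)+m(r,\partial_i g/g)$, which after the first-order lemma still yields $\log^+T(r,\partial_i g)=O(\log^+T(r,g))$ as you need; and in Step~3 the passage from radial derivatives $f_z'/f_z$ back to a fixed coordinate derivative $\partial_i f/f$ on all of $S(r)$ requires more than rotating a single frame---one really does need the $U(m)$-average or a direct several-variable Green--Jensen computation, which is in fact how \cite{NO} proceeds.
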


Repeating the argument in \cite[Proposition 4.5]{Fu}, we have the following proposition.
\begin{proposition}[{see \cite[Proposition 4.5]{Fu}}]\label{2.2}
Let $\Phi_1,\ldots,\Phi_{k+1}$ be meromorphic functions on $\C^m$ such that $\{\Phi_1,\ldots,\Phi_{k+1}\}$ 
are  linearly independent over $\C.$
Then  there exists an admissible set $\{\alpha_i=(\alpha_{i1},\ldots,\alpha_{im})\}_{i=1}^{k+1} \subset \Z^m_+$ with $|\alpha_i|=\sum_{j=1}^{m}\alpha_{ij}\le i-1 \ (1\le i \le k+1)$ satisfying the following two properties:

(i)\  $\{{\mathcal D}^{\alpha_i}\Phi_1,\ldots,{\mathcal D}^{\alpha_i}\Phi_{k+1}\}_{i=1}^{k+1}$ is linearly independent over $\mathcal M$ (the field of all meromorphic functions on $\C^m$),\ i.e., 
$$\det{({\mathcal D}^{\alpha_i}\Phi_j)}_{1\le i,j\le k+1}\not\equiv 0,$$

(ii) $\det \bigl({\mathcal D}^{\alpha_i}(h\Phi_j)\bigl)=h^{k+1}\det \bigl({\mathcal D}^{\alpha_i}\Phi_j\bigl)$ for
any nonzero meromorphic function $h$ on $\C^m.$
\end{proposition}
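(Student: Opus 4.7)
Both parts of the proposition rest on a stronger notion of admissibility than the bare bound $|\alpha_i|\le i-1$: the set $S=\{\alpha_1,\ldots,\alpha_{k+1}\}$ should be \emph{downward closed} under the coordinatewise partial order on $\Z^m_{\ge 0}$ and enumerated compatibly with that order (so $\alpha_{i'}\le\alpha_i$ forces $i'\le i$). Under this convention the numerical bound $|\alpha_i|\le i-1$ is automatic, since any downward-closed set containing $\alpha_i$ must contain an entire chain $0<e_{j_1}<e_{j_1}+e_{j_2}<\cdots<\alpha_i$ of length $|\alpha_i|+1$.

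For (i), I construct $S$ greedily. Take $\alpha_1=(0,\ldots,0)$ and assume inductively that a downward-closed $S_i=\{\alpha_1,\ldots,\alpha_i\}$ (with $i\le k$) has been built such that the matrix $(\mathcal{D}^{\alpha_s}\Phi_t)_{s\le i,\,t\le k+1}$ has rank $i$. Call $\alpha\notin S_i$ \emph{adjacent} to $S_i$ if every strict predecessor of $\alpha$ lies in $S_i$, so that $S_i\cup\{\alpha\}$ remains downward closed. I claim that some adjacent $\alpha$ makes the enlarged matrix attain rank $i+1$; this $\alpha$ becomes $\alpha_{i+1}$ and the induction continues. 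If not, every adjacent row $v_\alpha=(\mathcal{D}^\alpha\Phi_t)_t$ would lie in the $\mathcal{M}$-span of $v_{\alpha_1},\ldots,v_{\alpha_i}$. An iterated differentiation argument — applying $\partial_l$ to an existing representation $v_{\alpha-e_l}=\sum_s g_s v_{\alpha_s}$ and controlling the newly appearing vectors $v_{\alpha_s+e_l}$ by appealing recursively either to their membership in $S_i$ or to the hypothesis at an earlier step of a carefully chosen well-founded induction on $|\alpha|$ — then propagates the conclusion to \emph{every} $\alpha\in\Z^m_{\ge 0}$. The infinite matrix $(\mathcal{D}^\alpha\Phi_t)_{\alpha,t}$ therefore has column rank at most $i<k+1$, producing $c_1,\ldots,c_{k+1}\in\mathcal{M}$, not all zero, with $\sum_{t=1}^{k+1}c_t\,\mathcal{D}^\alpha\Phi_t=0$ for every $\alpha\in\Z^m_{\ge 0}$.

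To convert this into a $\C$-linear dependence I apply $\partial_l$ to the above identity and subtract its instance at $\alpha+e_l$, obtaining $\sum_t(\partial_l c_t)\mathcal{D}^\alpha\Phi_t=0$ as well. Choosing a nonzero solution $(c_t)$ whose support (as a subset of $\{1,\ldots,k+1\}$) has minimal cardinality, I normalize $c_{t_0}=1$ for some $t_0$ in its support; then $(\partial_l c_t)$ is another solution with $t_0$-th entry zero, so its support is strictly smaller than that of $(c_t)$. By minimality $(\partial_l c_t)\equiv 0$ for every $l$, so each $c_t$ is a constant, and the $\alpha=0$ instance $\sum_t c_t\Phi_t=0$ contradicts the $\C$-linear independence of $\Phi_1,\ldots,\Phi_{k+1}$. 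This proves (i).

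For (ii), downward closure provides, for each $\alpha_i$ and each $\beta\le\alpha_i$, a unique $i'\le i$ with $\alpha_{i'}=\alpha_i-\beta$. The Leibniz rule gives
$$\mathcal{D}^{\alpha_i}(h\Phi_j)=\sum_{i'\le i}\binom{\alpha_i}{\alpha_i-\alpha_{i'}}\bigl(\mathcal{D}^{\alpha_i-\alpha_{i'}}h\bigr)\mathcal{D}^{\alpha_{i'}}\Phi_j,$$
which presents $\bigl(\mathcal{D}^{\alpha_i}(h\Phi_j)\bigr)=C\cdot\bigl(\mathcal{D}^{\alpha_{i'}}\Phi_j\bigr)$ with $C$ lower triangular and every diagonal entry equal to $h$. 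Taking determinants yields $\det C=h^{k+1}$, hence (ii). The main obstacle is the propagation step in (i): one must set up a well-founded induction so that the vectors $v_{\alpha_s+e_l}$ appearing after differentiation are always already under control, and then exploit the support-minimization trick to extract constant coefficients from the resulting $\mathcal{M}$-linear relation. This is precisely the delicate combinatorial-differential computation carried out in Fujimoto's Proposition~4.5, which this proof mirrors.
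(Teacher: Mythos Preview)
The paper does not supply a proof of this proposition; it merely cites Fujimoto \cite[Proposition~4.5]{Fu}. Your sketch correctly captures the architecture of Fujimoto's argument --- greedy selection of multi-indices, the Leibniz/lower-triangular computation for (ii), and the support-minimization trick that turns an $\mathcal{M}$-linear relation into a $\C$-linear one for (i). Part~(ii) and the final step of~(i) are handled cleanly.

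The propagation step in~(i), however, is not closed as written. You assume that if every \emph{adjacent} $\alpha$ (minimal in the complement of the downward-closed $S_i$) has $v_\alpha\in W$, then every $v_\alpha\in W$, and you say this follows by induction on $|\alpha|$. But when you differentiate $v_{\alpha-e_l}=\sum_s g_s v_{\alpha_s}$ you produce terms $v_{\alpha_s+e_l}$ with $|\alpha_s+e_l|=|\alpha_s|+1$, which may well exceed $|\alpha|$; and $\alpha_s+e_l$ need not be adjacent to $S_i$ (e.g.\ $S_i=\{(0,0),(1,0),(2,0)\}$, $\alpha_s=(2,0)$, $l=2$ gives $(2,1)$, whose predecessor $(1,1)$ is not in $S_i$). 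So neither the inductive hypothesis nor the adjacency assumption covers these terms, and the recursion you describe does not terminate on the stated well-founded order. In simple examples one can still push the argument through by ad hoc orderings, but in general one runs into coupled systems (for instance with $S_i=\{(0,0),(1,0),(0,1),(2,0),(0,2)\}$ the boundary elements $(2,1)$ and $(1,2)$ each appear in the differentiation formula for the other), and it is not clear your scheme resolves them.

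Fujimoto's actual construction sidesteps this. He fixes a total order $\prec$ (graded lexicographic) refining the coordinatewise order and defines $\alpha_{i+1}$ as the $\prec$-least $\alpha$ with $v_\alpha\notin\mathrm{span}(v_{\alpha_1},\ldots,v_{\alpha_i})$. Existence of such $\alpha$ follows directly from the support-minimization argument (no propagation needed), and the bound $|\alpha_{i+1}|\le|\alpha_i|+1$ comes from the stabilization property of the degree filtration $W_n=\mathrm{span}(v_\alpha:|\alpha|\le n)$. For~(ii) one then uses not downward closure but the weaker fact that every $\beta<\alpha_i$ satisfies $v_\beta\in\mathrm{span}(v_{\alpha_1},\ldots,v_{\alpha_{i-1}})$; Leibniz then still yields a lower-triangular transformation with $h$ on the diagonal, only now with $\mathcal{M}$-valued off-diagonal entries. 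This is the route you should follow if you want a complete argument without appealing back to \cite{Fu}.
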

We note that $\alpha_1,...,\alpha_{k+1}$ are chosen uniquely in an explicit way (see \cite{Fu}). Then we define the general Wronskian of the mapping $\Phi =(\Phi_1,\ldots,\Phi_{k+1})$ by 
$$ W(\Phi):=\det{({\mathcal D}^{\alpha_i}\Phi_j)}_{1\le i,j\le k+1}. $$

The following theorem is called the general form of the second main theorem, which is given by M. Ru \cite{Ru97}
\begin{theorem}[see \cite{Ru97}]\label{2.4}
Let $\C^m\rightarrow\P^n(\C)$ be a linearly nondegenerate holomorphic curve with a reduced representation $\tilde f=(f_0,...,f_n)$. Let $H_1,...,H_q$ be arbitrary hyperplanes in $\P^n(\C)$. Denote by $W(\tilde f)$ the general Wronskian of $(f_0,...,f_n)$.
Then, for every $\epsilon >0$,
$$\int\limits_{S(r)}\max_{K}\log\prod_{i\in K}\dfrac{||\tilde f||\cdot ||H_i||}{|H_i(\tilde f)|}\sigma_m+ N_{W(\tilde f)}(r)\le (n +1+\epsilon)T_f(r),$$
where the maximum is taken over all subsets $K$ of $\{1,...,q\}$ so that $\{H_i\ |\ i\in K\}$ is linearly independent.
\end{theorem}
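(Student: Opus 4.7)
My plan is to deduce Theorem~\ref{1.3} from Cartan's general second main theorem for hyperplanes (Theorem~\ref{2.4}) via a Veronese-plus-filtration argument, combined with a pointwise replacement of the $N+1$ locally smallest hypersurfaces by $n+1$ general-position combinations. First, I would replace each $Q_i$ by $Q_i^{d/d_i}$ to reduce to the common-degree case; the counting functions scale by $d/d_i\le 1$, the truncation level at $M_0$ transports to truncation at $\lfloor M_0 d_i/d\rfloor\le M_0$ on the original data, and the defect $p(n+1)$ is unaffected.

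Next I would convert the subgeneral-position hypothesis into general-position data suitable for the filtration via a pointwise replacement. For each subset $I\subset\{1,\dots,q\}$ with $|I|=N+1$, the hypothesis $\bigcap_{i\in I}Q_i=\emptyset$ permits the choice of $n+1$ linearly independent $\C$-linear combinations $P_{I,1},\dots,P_{I,n+1}$ of $\{Q_i\}_{i\in I}$ whose own common zero locus in $\P^n(\C)$ is also empty; a generic choice of coefficients works because empty common zero is an open condition on the Grassmannian of $(n+1)$-dimensional subspaces of $\mathrm{Span}_\C\{Q_i:i\in I\}$. At each $z\in\C^m$, let $I(z)$ be an index set of size $N+1$ realizing the $N+1$ smallest values of $|Q_i(\tilde f(z))|/\|\tilde f(z)\|^d$, so that $(P_{I(z),k})_{k=1}^{n+1}$ serves as a general-position substitute for the locally dominant $Q_i$.

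Following Ru's filtration strategy, I would then fix a large auxiliary integer $u$ (eventually optimized in terms of $\epsilon^{-1}$) and filter the degree-$ud$ piece $\C[x_0,\dots,x_n]_{ud}$ by the products $P_{I,1}^{a_1}\dotsm P_{I,n+1}^{a_{n+1}}$ for each subset $I$. Composing $f$ with the Veronese embedding $\phi_{ud}:\P^n(\C)\to\P^{H(ud)-1}(\C)$, each such product becomes a hyperplane in $\P^{H(ud)-1}(\C)$, and applying Theorem~\ref{2.4} to $\phi_{ud}\circ f$ with this finite collection, then relating the maximum over linearly independent $K$ to the sum over $i\in I(z)$ through an effective Nullstellensatz estimate, produces
$$(q-p(n+1)-\epsilon)T_f(r)\le\sum_{i=1}^q\frac{1}{d_i}N_{Q_i(\tilde f)}(r)-\frac{1}{d}N_W(r)+o(T_f(r)),$$
where $W$ is the corresponding Wronskian. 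The combinatorial factor $p=N-n+1$ arises from covering the $N+1$ locally significant indices in $I(z)$ by $p$ overlapping groups of size $n+1$, each application of Cartan's theorem to such a group contributing the standard defect $n+1$.

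The hardest step, and the source of the improved truncation, is converting $-\frac{1}{d}N_W(r)$ into an excess-multiplicity bound with the explicit $M_0=4(edp(n+1)^2I(\epsilon^{-1}))^n-1$. The admissible multi-indices produced by Proposition~\ref{2.2} have order at most $H(ud)-1$, and the vanishing order of $W$ along the divisor of $Q_i(\tilde f)$ is controlled by $M_0$ after optimizing $u\sim I(\epsilon^{-1})$ so as to balance the growth of the Wronskian term against the defect contribution of the filtration. The absence of a $q!$ factor, compared with Theorem~\ref{1.1}, is the principal payoff here: the pointwise replacement depends only on subsets of size $N+1$ (of which there are $\binom{q}{N+1}$, polynomial in $q$) and on purely linear data in the degree-$d$ part of $\C[x_0,\dots,x_n]$, rather than on a global ordering of all $q$ hypersurfaces as in the iterated auxiliary-function estimate that underlies Theorem~\ref{1.1}.
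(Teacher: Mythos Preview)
Your proposal addresses the wrong statement. The statement in question is Theorem~\ref{2.4} --- Ru's general form of the second main theorem for hyperplanes --- but your entire outline is a derivation of Theorem~\ref{1.3}, the paper's second main theorem for hypersurfaces in $N$-subgeneral position in $\P^n(\C)$. You explicitly take Theorem~\ref{2.4} as a black-box input (``My plan is to deduce Theorem~\ref{1.3} from \ldots\ Theorem~\ref{2.4}'') and then build the filtration and replacement machinery on top of it. That does not establish Theorem~\ref{2.4} at all; it presupposes it.

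For the record, the paper does not prove Theorem~\ref{2.4} either: it is quoted from \cite{Ru97} for $m=1$, with the remark that the several-variable extension goes through verbatim. If you were aiming at Theorem~\ref{1.3} all along, note that the paper's own proof of Theorem~\ref{1.3} differs from your sketch in one structural respect: it does \emph{not} pass through Theorem~\ref{2.4}. Instead it applies the Corvaja--Zannier filtration (Lemma~\ref{3.19}) directly, bounds the logarithmic terms via the lemma on logarithmic derivative applied to $W^{\alpha}(\Psi^{i_0}(\tilde f))/\prod_i\psi_i^{i_0}(\tilde f)$, and handles the Wronskian contribution by elementary vanishing-order estimates. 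Your Veronese-plus-Theorem~\ref{2.4} route is closer in spirit to the paper's proof of Theorem~\ref{1.1}; that route is what produces the extraneous combinatorial factors which Theorem~\ref{1.3} is specifically designed to avoid, so your claim that it yields the sharper $M_0$ without a $q!$-type blowup would need careful justification.
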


We note that, M. Ru proved the above theorem for the case $m=1$, but this theorem also holds for the general case. The proof of this theorem for the general case is the same with that of the special case $m=1$.

\vskip0.2cm 
\noindent
{\bf 2.6 Chow weights and Hilbert weights.} We recall the notion of Chow weights and Hilbert weights from \cite{Ru09}.

Let $X\subset\P^n(\C)$ be a projective variety of dimension $k$ and degree $\Delta$.  To $X$ we associate, up to a constant scalar, a unique polynomial
$$F_X(\textbf{u}_0,\ldots,\textbf{u}_k) = F_X(u_{00},\ldots,u_{0n};\ldots; u_{k0},\ldots,u_{kn})$$
in $k+1$ blocks of variables $\textbf{u}_i=(u_{i0},\ldots,u_{in}), i = 0,\ldots,k$, which is called the Chow form of $X$, with the following
properties: $F_X$ is irreducible in $\C[u_{00},\ldots,u_{kn}]$, $F_X$ is homogeneous of degree $\Delta$ in each block $\textbf{u}_i, i=0,\ldots,k$, and $F_X(\textbf{u}_0,\ldots,\textbf{u}_k) = 0$ if and only if $X\cap H_{\textbf{u}_0}\cap H_{\textbf{u}_k}\ne\emptyset$, where $H_{\textbf{u}_i}, i = 0,\ldots,k$, are the hyperplanes given by
$$u_{i0}x_0+\cdots+ u_{in}x_n=0.$$

Let $F_X$ be the Chow form associated to $X$. Let ${\bf c}=(c_0,\ldots, c_n)$ be a tuple of real numbers. Let $t$ be an auxiliary variable. We consider the decomposition
\begin{align}\label{2.5}
\begin{split}
F_X(t^{c_0}u_{00},&\ldots,t^{c_n}u_{0n};\ldots ; t^{c_0}u_{k0},\ldots,t^{c_n}u_{kn})\\ 
& = t^{e_0}G_0(\textbf{u}_0,\ldots,\textbf{u}_n)+\cdots +t^{e_r}G_r(\textbf{u}_0,\ldots, \textbf{u}_n).
\end{split}
\end{align}
with $G_0,\ldots,G_r\in\C[u_{00},\ldots,u_{0n};\ldots; u_{k0},\ldots,u_{kn}]$ and $e_0>e_1>\cdots>e_r$. The Chow weight of $X$ with respect to $c$ is defined by
\begin{align}\label{2.6}
e_X({\bf c}):=e_0.
\end{align}
For each subset $J = \{j_0,...,j_k\}$ of $\{0,...,n\}$ with $j_0<j_1<\cdots<j_k,$ we define the bracket
\begin{align}\label{2.7}
[J] = [J]({\bf u}_0,\ldots,{\bf u}_n):= \det (u_{ij_t}), i,t=0,\ldots ,k,
\end{align}
where $\textbf{u}_i = (u_{i_0},\ldots ,u_{in})$ denotes the blocks of $n+1$ variables. Let $J_1,\ldots ,J_\beta$ with $\beta=\binom{n+1}{k+1}$ be all subsets of $\{0,...,n\}$ of cardinality $k+1$.

Then the Chow form $F_X$ of $X$ can be written as a homogeneous polynomial of degree $\Delta$ in $[J_1],\ldots,[J_\beta]$. We may see that for $\textbf{c}=(c_0,\ldots,c_n)\in\R^{n+1}$ and for any $J$ among $J_1,\ldots,J_\beta$,
\begin{align}\label{2.8}
\begin{split}
[J](t^{c_0}u_{00},\ldots,t^{c_n}u_{0n},&\ldots,t^{c_0}u_{k0},\ldots,t^{c_n}u_{kn})\\
&=t\sum_{j\in J}c_j[J](u_{00},\ldots,u_{0n},\ldots,u_{k0},\ldots,u_{kn}).
\end{split}
\end{align}

For $\textbf{a} = (a_0,\ldots,a_n)\in\mathbb Z^{n+1}$ we write ${\bf x}^{\bf a}$ for the monomial $x^{a_0}_0\cdots x^{a_n}_n$. Let $I=I_X$ be the prime ideal in $\C[x_0,\ldots,x_n]$ deffning $X$. Let $\C[x_0,\ldots,x_n]_m$ denote the vector space of homogeneous polynomials in $\C[x_0,\ldots,x_n]$ of degree $m$ (including $0$). Put $I_m :=\ C[x_0,\ldots,x_n]_m\cap I$ and define the Hilbert function $H_X$ of $X$ by, for $m = 1, 2,...,$
\begin{align}\label{2.9}
H_X(m):=\dim (\C[x_0,...,x_n]_m/I_m).
\end{align}
By the usual theory of Hilbert polynomials,
\begin{align}\label{2.10}
H_X(m)=\Delta\cdot\frac{m^n}{n!}+O(m^{n-1}).
\end{align}
The $m$-th Hilbert weight $S_X(m,{\bf c})$ of $X$ with respect to the tuple ${\bf c}=(c_0,\ldots,c_n)\in\mathbb R^{n+1}$ is defined by
\begin{align}\label{2.11}
S_X(m,{\bf c}):=\max\left (\sum_{i=1}^{H_X(m)}{\bf a}_i\cdot{\bf c}\right),
\end{align}
where the maximum is taken over all sets of monomials ${\bf x}^{{\bf a}_1},\ldots,{\bf x}^{{\bf a}_{H_X(m)}}$ whose residue classes modulo $I$ form a basis of $\C[x_0,\ldots,x_n]_m/I_m.$

%According to Mumford [Mu, Prop. 2.11], we have
%$$ S_X(m,{\bf c})=e_X({\bf c})\cdot\frac{m^{n+1}}{(n+1)!}+O(m^n).$$
%Together with (\ref{2.6}), this implies that
%\begin{align}\label{2.8}
%\lim\limits_{m\to +\infty}\frac{1}{mH_X(m)}\cdot S_X(m,{\bf c})=\frac{1}{(n+1)\Delta}\cdot e_X({\bf c}).
%\end{align}
%We call $\frac{1}{mH_X(m)}S_X(m,{\bf c})$ the $m$-th normalized Hilbert weight and $\frac{1}{(n+1)\Delta}e_X({\bf c})$ the normalized Chow weight of $X$ with respect to ${\bf c}$.

The following theorem is due to J. Evertse and R. Ferretti \cite{EF1} and is restated by M. Ru \cite{Ru09} for the special case when the filed $K=\C$.
\begin{theorem}[{Theorem 4.1 \cite{EF1}, see also Theorem 2.1 \cite{Ru09}}]\label{2.12}
Let $X\subset\P^n(\C)$ be an algebraic variety of dimension $k$ and degree $\Delta$. Let $m>\Delta$ be an integer and let ${\bf c}=(c_0,\ldots,c_n)\in\mathbb R^{n+1}_{\geqslant 0}$.
Then
$$ \frac{1}{mH_X(m)}S_X(m,{\bf c})\ge\frac{1}{(n+1)\Delta}e_X({\bf c})-\frac{(2n+1)\Delta}{m}\cdot\left (\max_{i=0,...,n}c_i\right). $$
\end{theorem}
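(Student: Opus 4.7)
The strategy is to realize the Hilbert weight $S_X(m,{\bf c})$ as the integrated mass of a ${\bf c}$-weighted filtration on the graded piece $A_m := \C[x_0,\ldots,x_n]_m/I_m$, and then to identify the leading asymptotic in $m$ with the Chow weight $e_X({\bf c})$. Concretely, for each real $t\ge 0$, let $\mathcal{F}_tA_m$ denote the image in $A_m$ of $\mathrm{span}\{{\bf x}^{\bf a}:{\bf a}\cdot{\bf c}\ge t\}$. Because any basis of $A_m$ consisting of residue classes of monomials can be chosen compatibly with this filtration, a summation-by-parts inspection of the definition (\ref{2.11}) gives, at least for rational ${\bf c}$,
$$S_X(m,{\bf c})=\sum_{t\ge 1}\dim \mathcal{F}_tA_m,$$
and a density/continuity argument extends the analysis to arbitrary ${\bf c}\in\R^{n+1}_{\ge 0}$.

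Second, I would identify $\dim\mathcal{F}_tA_m$ with the degree-$m$ piece of a flat degeneration determined by ${\bf c}$. The one-parameter subgroup $t\mapsto\mathrm{diag}(t^{c_0},\ldots,t^{c_n})$ acts on $\P^n(\C)$ and carries $X$ to a flat limit $Y_{\bf c}$ whose ideal coincides with the initial ideal $\mathrm{in}_{\bf c}(I_X)$. By flatness, $H_{Y_{\bf c}}(m)=H_X(m)$, so $\dim\mathcal{F}_tA_m$ is the number of standard monomials of $\mathrm{in}_{\bf c}(I_X)$ of weight $\ge t$ in degree $m$. The decomposition (\ref{2.5}) shows that the Chow form of $Y_{\bf c}$ is exactly the top coefficient $G_0$, and (\ref{2.6}) identifies $e_X({\bf c})$ with the ${\bf c}$-weight of the maximal bracket monomial in $F_X$. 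Thus the leading behavior of $\sum_{t\ge 1}\dim\mathcal{F}_tA_m$ is governed by $e_X({\bf c})$ through the Chow-form expansion, and one recovers in the limit $\lim_{m\to\infty}S_X(m,{\bf c})/(mH_X(m))=e_X({\bf c})/((n+1)\Delta)$.

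Finally, the explicit error term $(2n+1)\Delta/m$ must be extracted by a careful accounting of lower-order contributions. There are two sources of slippage: the deviation of $H_X(m)$ from its leading $\Delta m^k/k!$ by $O(m^{k-1})$, which after division by $mH_X(m)$ contributes a correction proportional to $\Delta\max_ic_i/m$; and the monomials near the boundary of the weight filtration (those with weight within $\max_ic_i$ of the top), which are not captured by the leading Chow coefficient and produce a further loss of the same order. Bounding both contributions using the trivial inequality $\dim\mathcal{F}_tA_m\le H_X(m)$ together with the degree bound is what yields the stated constant $(2n+1)$. The hard part is precisely this last step: simultaneously controlling the three expansions (\ref{2.5}), (\ref{2.10}), and the filtered sum in a way that produces the sharp constant $(2n+1)$ rather than a cruder $O(\Delta/m)$ estimate is the heart of the Evertse--Ferretti argument, and in my plan it would be the main technical obstacle.
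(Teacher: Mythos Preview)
The paper does not give its own proof of this theorem: it is quoted verbatim from Evertse--Ferretti \cite{EF1} (restated for $\C$ in \cite{Ru09}) and used as a black box. So there is no in-paper argument to compare your proposal against.

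That said, your sketch is a plausible outline of the circle of ideas behind the cited result, but it is not the route actually taken in \cite{EF1}. Evertse and Ferretti work more combinatorially: they pass to the initial ideal with respect to a ${\bf c}$-compatible monomial order, decompose the corresponding degeneration into a union of linear subspaces (counted with multiplicity), and then compute both $S_X(m,{\bf c})$ and $e_X({\bf c})$ explicitly on each linear piece, where the inequality is elementary. The error term $(2n+1)\Delta/m$ comes out of an explicit bound on the Hilbert function of a degree-$\Delta$ subscheme for $m>\Delta$, not from the filtration-counting heuristic you describe. Your framing via the one-parameter degeneration and the asymptotic $\lim_{m\to\infty} S_X(m,{\bf c})/(mH_X(m)) = e_X({\bf c})/((k+1)\Delta)$ is essentially Mumford's viewpoint on Chow stability and is morally correct for the leading term, but as you yourself note, it does not by itself deliver the effective constant $(2n+1)$; that step in your plan is genuinely incomplete, whereas in \cite{EF1} it is handled by the linear-subspace reduction. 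Also note a slip: the denominator in the limit should be $(k+1)\Delta$, matching $\dim X = k$, not $(n+1)\Delta$; the paper's statement (copied from \cite{Ru09}) already contains this discrepancy and only the case $k=n$ is literally correct as written.
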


The following lemma is due to J. Evertse and R. Ferretti \cite{EF2} for the case of the field $\mathbb Q^p$ and reproved by M. Ru \cite{Ru09} for the case of the field $\C$.
\begin{lemma}[{Lemma 3.2 \cite{Ru09}, see also Lemma 5.1 \cite{EF2}}]\label{2.13}
Let $Y$ be a subvariety of $\P^{q-1}(\C)$ of dimension $n$ and degree $\Delta$. Let ${\bf c}=(c_1,\ldots, c_q)$ be a tuple of positive reals. Let $\{i_0,...,i_n\}$ be a subset of $\{1,...,q\}$ such that
$$Y \cap \{y_{i_0}=\cdots =y_{i_n}=0\}=\emptyset.$$
Then
$$e_Y({\bf c})\ge (c_{i_0}+\cdots +c_{i_n})\Delta.$$
\end{lemma}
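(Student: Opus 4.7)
The plan is to probe both sides of the decomposition (\ref{2.5}) at a single well-chosen evaluation point where the Chow form does not vanish, and then read off the leading $t$-exponent by homogeneity. Concretely, for $s = 0,\ldots,n$, let $\mathbf{e}_{i_s}$ denote the standard basis vector in $\C^q$ with a $1$ in position $i_s$ and zeros elsewhere. The associated hyperplane $H_{\mathbf{e}_{i_s}}$ is precisely $\{y_{i_s}=0\}$, so the hypothesis $Y\cap\{y_{i_0}=\cdots=y_{i_n}=0\}=\emptyset$, together with the defining geometric property of the Chow form, yields $F_Y(\mathbf{e}_{i_0},\ldots,\mathbf{e}_{i_n})\ne 0$.

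Next I would substitute $\mathbf{u}_s=\mathbf{e}_{i_s}$ for $s=0,\ldots,n$ into the identity (\ref{2.5}) (the tuple $\mathbf{c}$ is now indexed by $\{1,\ldots,q\}$ in place of $\{0,\ldots,n\}$, but the construction is formally identical). After the scaling $u_{sj}\mapsto t^{c_j}u_{sj}$, the $s$-th block becomes $t^{c_{i_s}}\mathbf{e}_{i_s}$, and invoking the homogeneity of $F_Y$ of degree $\Delta$ in each block, the left-hand side collapses to the single monomial
$$t^{(c_{i_0}+\cdots+c_{i_n})\Delta}\cdot F_Y(\mathbf{e}_{i_0},\ldots,\mathbf{e}_{i_n}),$$
which is nonzero by the previous step. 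On the right-hand side, the same substitution produces the polynomial $\sum_{s=0}^{r}t^{e_s}G_s(\mathbf{e}_{i_0},\ldots,\mathbf{e}_{i_n})$, whose exponents form the strictly descending list $e_0>e_1>\cdots>e_r$.

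Equating the two expressions as elements of $\C[t^{c_j}\,;\,j]$, the unique exponent $(c_{i_0}+\cdots+c_{i_n})\Delta$ appearing on the left must coincide with some $e_s$ for which $G_s(\mathbf{e}_{i_0},\ldots,\mathbf{e}_{i_n})\ne 0$; since $e_0$ is the maximum of the $e_s$'s, this immediately gives $e_Y(\mathbf{c})=e_0\ge(c_{i_0}+\cdots+c_{i_n})\Delta$. The only genuinely nontrivial input is the nonvanishing statement $F_Y(\mathbf{e}_{i_0},\ldots,\mathbf{e}_{i_n})\ne 0$, which rests on the defining property of the Chow form, namely that it vanishes at a tuple of hyperplanes exactly when those hyperplanes meet $Y$ nontrivially; the remainder of the argument is purely bookkeeping of monomials under a one-parameter subgroup action.
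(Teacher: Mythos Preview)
Your argument is correct. Note, however, that the paper itself does not prove this lemma --- it is simply quoted from \cite{Ru09} (Lemma~3.2) and \cite{EF2} (Lemma~5.1) --- so there is no in-paper proof to compare against. Your direct substitution of the coordinate vectors ${\bf e}_{i_s}$ into the defining decomposition (\ref{2.5}), combined with the block-homogeneity of $F_Y$ of degree $\Delta$ and the geometric characterization of the vanishing locus of the Chow form, is exactly the standard route taken in those references. One small comment on phrasing: since the $c_j$ are arbitrary positive reals rather than integers, the notation ``$\C[t^{c_j}\,;\,j]$'' is informal; what you are actually using is that the functions $t\mapsto t^{e_s}$ for pairwise distinct real exponents $e_s$ are linearly independent over $\C$ (say, by comparing asymptotics as $t\to\infty$), so the single nonzero term on the left must match exactly one term on the right, and in particular cannot exceed $e_0$.
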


\section{Proof of main theorems}

In order to prove main theorems, we first give the following key lemma.

\begin{lemma}\label{3.1}
Let $V$ be a smooth projective subvariety of $\P^n(\C)$ of dimension $k$. Let $Q_1,...,Q_{N+1}$ be hypersurfaces in $\P^n(\C)$ of the same degree $d\ge 1$, such that
$$\left (\bigcap_{i=1}^{N+1}Q_i\right )\cap V=\emptyset.$$
Then there exists $k$ hypersurfaces $P_{2},...,P_{k+1}$ of the forms
$$P_t=\sum_{j=2}^{N-k+t}c_{tj}Q_j, \ c_{tj}\in\C,\ t=2,...,k+1,$$
such that $\left (\bigcap_{t=1}^{k+1}P_t\right )\cap V=\emptyset,$ where $P_1=Q_1$.
\end{lemma}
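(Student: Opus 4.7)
The plan is to construct $P_2, \ldots, P_{k+1}$ inductively in $t$, maintaining the invariant
$$\dim Z_t \le k - t, \qquad Z_t := V \cap P_1 \cap P_2 \cap \cdots \cap P_t.$$
The base case $t = 1$ amounts to $\dim(V \cap Q_1) \le k - 1$, which we may assume since $V \not\subset Q_1$ (otherwise the $k$ remaining hypersurface sections $P_2, \ldots, P_{k+1}$ could not cut the $k$-dimensional $V$ down to $\emptyset$, so the conclusion of the lemma could not possibly hold).

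For the inductive step, assume $\dim Z_{t-1} \le k - t + 1$; if this inequality is strict any $P_t$ works, so suppose $\dim Z_{t-1} = k - t + 1$ and let $W_1, \ldots, W_s$ be its top-dimensional irreducible components. The key claim is that for each $W_i$ there exists $j_i \in \{2, \ldots, N-k+t\}$ with $W_i \not\subset Q_{j_i}$. If not, then $W_i \subset Q_j$ for all $j = 2, \ldots, N-k+t$, and combined with $W_i \subset Z_{t-1} \subset P_1 = Q_1$ this forces $W_i \subset V \cap \bigcap_{j=1}^{N-k+t} Q_j$. Intersecting the projective variety $W_i$, of dimension $k - t + 1$, with the $k + 1 - t$ remaining hypersurfaces $Q_{N-k+t+1}, \ldots, Q_{N+1}$ in $\P^n(\C)$ yields a subvariety of dimension at least $(k-t+1) - (k+1-t) = 0$, hence non-empty, contradicting the hypothesis $V \cap \bigcap_{j=1}^{N+1} Q_j = \emptyset$.

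Granted the claim, for each top-dimensional $W_i$ the set of tuples $(c_{tj})_{j=2}^{N-k+t} \in \C^{N-k+t-1}$ satisfying $\sum_{j=2}^{N-k+t} c_{tj} Q_j|_{W_i} \equiv 0$ is a proper linear subspace (since $Q_{j_i}|_{W_i} \not\equiv 0$). Choosing $(c_{tj})$ outside the finite union (over $i=1,\ldots,s$) of these proper subspaces produces $P_t = \sum_{j=2}^{N-k+t} c_{tj} Q_j$ containing no top-dimensional component of $Z_{t-1}$, so $\dim Z_t \le k - t$. Iterating up to $t = k+1$ gives $\dim Z_{k+1} \le -1$, i.e.\ $Z_{k+1} = \emptyset$, as desired.

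The main subtlety will be the index bookkeeping: at step $t$ one is restricted to linear combinations of $Q_2, \ldots, Q_{N-k+t}$ only, leaving exactly $k + 1 - t$ unused hypersurfaces $Q_{N-k+t+1}, \ldots, Q_{N+1}$, and this count matches precisely the top dimension $k - t + 1$ of $Z_{t-1}$ needed to force the Bezout-type nonemptiness in the contradiction step. Smoothness of $V$ plays no explicit role in this argument.
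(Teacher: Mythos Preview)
Your proof is correct and follows the same inductive scheme as the paper: construct $P_2,\ldots,P_{k+1}$ one at a time, showing at each step that the coefficient tuples for which $P_t$ contains a top-dimensional component of $Z_{t-1}$ form a finite union of proper linear subspaces, hence can be avoided. The only cosmetic difference is that the paper first records the auxiliary bound $\dim\bigl(\bigcap_{i=1}^{t}Q_i\cap V\bigr)\le N-t$ and appeals to it at each step, whereas you obtain the needed non-containment directly via the projective dimension theorem; your remark that $V\not\subset Q_1$ is implicitly required (and that smoothness is unused) is a valid observation the paper leaves unstated.
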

\begin{proof} Set $P_1=Q_1$. It is easy to see that
$$ \dim \left(\bigcap_{i=1}^tQ_i\right)\cap V\le N-t,\ t=N-k+2,...,N+1,$$
where $\dim\emptyset =-\infty$.

Step 1. Firstly, we will construct $P_2$ as follows. For each irreducible component $\Gamma$ of dimension $k-1$ of $P_1\cap V$, we put 
$$V_{1\Gamma}=\{c=(c_2,...,c_{N-k+2})\in\C^{N-k+1}\ ;\ \Gamma\subset Q_c,\text{ where }Q_c=\sum_{j=2}^{N-k+2}c_jQ_j\}.$$
Here, we also consider the case where $Q_c$ may be zero polynomial and it detemines all $\P^n(\C)$. It easy to see that $V_{1\Gamma}$ is a subspace of $\C^{N-k+1}$. Since $\dim \left(\bigcap_{i=0}^{N-k+1}Q_i\right)\cap V\le k-2$, there exists $i \ (1\le i\le N-k+1)$ such that $\Gamma\not\subset Q_i$. This implies that $V_{1\Gamma}$ is a proper subspace of $\C^{N-k+1}$. Since the set of irreducible components of dimension $k-1$ of $P_0\cap V$ is at most countable, 
$$ \C^{N-k+1}\setminus\bigcup_{\Gamma}V_{1\Gamma}\ne\emptyset. $$
Hence, there exists $(c_{12},...,c_{1(N-k+2)})\in\C^{N-k+1}$ such that
$$ \Gamma\not\subset P_2$$
for all irreducible components of dimension $k-1$ of $Q_1\cap V$, where
$P_2=\sum_{j=2}^{N-k+2}c_{1j}Q_j.$
This clearly implies that $\dim \left(P_1\cap P_2\right)\cap V\le k-2.$

Step 2. Now, for each irreducible component $\Gamma'$ of dimension $k-2$ of $\left(P_1\cap P_2\right)\cap V$, put 
$$V_{2\Gamma'}=\{c=(c_2,...,c_{N-k+3})\in\C^{N-k+2}\ ;\ \Gamma\subset Q_c,\text{ where }Q_c=\sum_{j=2}^{N-k+3}c_jQ_j\}.$$
Hence, $V_{2\Gamma'}$ is a subspace of $\C^{N-k+2}$. Since $\dim \left(\bigcap_{i=1}^{N-k+3}Q_i\right)\cap V\le k-3$, there exists $i, (2\le i\le N-k+3)$ such that $\Gamma'\not\subset Q_i$. This implies that $V_{2\Gamma'}$ is a proper subspace of $\C^{N-k+2}$. Since the set of irreducible components of dimension $k-2$ of $\left(P_1\cap P_2\right)\cap V$ is at most countable, 
$$ \C^{N-k+2}\setminus\bigcup_{\Gamma'}V_{2\Gamma'}\ne\emptyset. $$
Then, there exists $(c_{22},...,c_{2(N-k+3)})\in\C^{N-k+2}$ such that
$$ \Gamma'\not\subset P_3 $$
for all irreducible components of dimension $k-2$ of $P_1\cap P_2\cap V$, where
$P_3=\sum_{j=2}^{N-k+3}c_{2j}Q_j.$
And hence, this implies that $\dim \left(P_1\cap P_2\cap P_3\right)\cap V\le k-3.$

Repeating again the above steps, after the $k^{\rm th}$-step we get hypersurfaces $P_2,...,P_{k+1}$ stisfying
$$ \dim\left(\bigcap_{j=1}^tP_j\right)\cap V\le k-t. $$
In particular, $\left(\bigcap_{j=1}^{k+1}P_j\right)\cap V=\emptyset.$ We complete the proof of the lemma.
\end{proof}

\begin{proof}[{\sc Proof of Theorem \ref{1.1}}]
Firstly, we will prove the theorem for the case where all hypersurfaces $Q_i\ (1\le i\le q)$ are of the same degree $d$. We may also assume that $q>(N-k+1)(k+1)$. 
We denote by $\mathcal I$ the set of all permutations of the set $\{1,....,q\}$. Denote by $n_0$ the cardinality of $\mathcal I$. Then we have $n_0=q!$, and we may write that
$\mathcal I=\{I_1,....,I_{n_0}\}$
where $I_i=(I_i(1),...,I_i(q))\in\mathbb N^q$ and $I_1<I_2<\cdots <I_q$ in the lexicographic order.

For each $I_i\in\mathcal I$, we denote by $P_{i,1},...,P_{i,{k+1}}$ the hypersurfaces obtained in Lemma \ref{3.1} with respect to the hypersurfaces $Q_{I_i(1)},...,Q_{I_i(n+1)}$. It is easy to see that there exists a positive constant $B\ge 1$, which is chosen common for all $I_i\in\mathcal I$, such that
$$ |P_{i,t}(\omega)|\le B\max_{1\le j\le N-k+t}|Q_{I_i(j)}(\omega)|, $$
for all $1\le t\le k+1$ and for all $\omega=(\omega_0,...,\omega_n)\in\C^{n+1}$. 

Consider a reduced representation $\tilde f=(f_0,\ldots ,f_n): \C^m\rightarrow \C^{n+1}$ of $f$. Fix an element $I_i\in\mathcal I$. We denote by $S(i)$ the set of all points $z\in \C\setminus\bigcup_{i=1}^qQ_i(\tilde f)^{-1}(\{0\})$ such that
$$ |Q_{I_i(1)}(\tilde f)(z)|\le |Q_{I_i(2)}(\tilde f)(z)|\le\cdots\le |Q_{I_i(q)}(\tilde f)(z)|.$$
Since $Q_{1},\ldots,Q_{q}$ are in $N-$subgeneral position in $V$, by Lemma \ref{3.2}, there exist a positive constant $A$, which is chosen common for all $I_i$, such that
$$ ||\tilde f (z)||^d\le A\max_{1\le j\le N+1}|Q_{I_i(j)}(\tilde f)(z)|\ \forall z\in S(i). $$
Therefore, for $z\in S(i)$ we have
\begin{align*}
\prod_{i=1}^q\dfrac{||\tilde f (z)||^d}{|Q_i(\tilde f)(z)|}&\le A^{q-N-1}\prod_{j=1}^{N+1}\dfrac{||\tilde f (z)||^d}{|Q_{I_i(j)}(\tilde f)(z)|}\\
&\le A^{q-N-1}B^{k}\dfrac{||\tilde f (z)||^{(N+1)d}}{\bigl (\prod_{j=1}^{N+1-k}|Q_{I_i(j)}(\tilde f)(z)|\bigl )\cdot\prod_{j=2}^{k+1}|P_{i,j}(\tilde f)(z)|}\\
&\le A^{q-N-1}B^{k}\dfrac{||\tilde f (z)||^{(N+1)d}}{|P_{i,1}(\tilde f)(z)|^{N-k+1}\cdot\prod_{j=2}^{k+1}|P_{i,j}(\tilde f)(z)|}\\
&\le A^{q-N-1}B^{k}C^{(N-k)}\dfrac{||\tilde f (z)||^{(N+1)d+(N-k)kd}}{\prod_{j=1}^{k+1}|P_{i,j}(\tilde f)(z)|^{N-k+1}},
\end{align*}
where $C$ is a positive constant, which is chosen common for all $I_i\in\mathcal I$, such that 
$$|P_{i,j}(\omega)|\le C||\omega||^d, \ \forall \omega=(\omega_0,...,\omega_n)\in\C^{n+1}.$$
The above inequality implies that
\begin{align}\label{3.2}
\log \prod_{i=1}^q\dfrac{||\tilde f (z)||^d}{|Q_i(\tilde f)(z)|}\le \log(A^{q-N-1}B^{k}C^{(N-k)})+(N-k+1)\log \dfrac{||\tilde f (z)||^{(k+1)d}}{\prod_{j=1}^{k+1}|P_{i,j}(\tilde f)(z)|}.
\end{align}

We consider the mapping $\Phi$ from $V$ into $\P^{l-1}(\C)\ (l=n_0(k+1))$, which maps a point $x\in V$ into the point $\Phi(x)\in\P^{l-1}(\C)$ given by
$$\Phi(x)=(P_{1,1}(x):\cdots : P_{1,k+1}(x):P_{2,1}(x):\cdots:P_{2,k+1}(x):\cdots:P_{n_0,1}(x):\cdots :P_{n_0,k+1}(x)).$$ 
Let $Y=\Phi (V)$. Since $V\cap\bigcap_{j=1}^{k+1}P_{1,j}=\emptyset$, $\Phi$ is a finite morphism on $V$ and $Y$ is a complex projective subvariety of $\P^{p-1}(\C)$ with $\dim Y=k$ and $\Delta:=\deg Y=\le d^k.\deg V$. 
For every 
$${\bf a} = (a_{1,1},\ldots ,a_{1,k+1},a_{2,1}\ldots,a_{2,k+1},\ldots,a_{n_0,1},\ldots,a_{n_0,k+1})\in\mathbb Z^l_{\ge 0}$$ 
and
$${\bf y} = (y_{1,1},\ldots ,y_{1,k+1},y_{2,1}\ldots,y_{2,k+1},\ldots,y_{n_0,1},\ldots,y_{n_0,k+1})$$ 
we denote ${\bf y}^{\bf a} = y_{1,1}^{a_{1,1}}\ldots y_{1,k+1}^{a_{1,k+1}}\ldots y_{n_0,1}^{a_{n_0,1}}\ldots y_{n_0,k+1}^{a_{n_0,k+1}}$. Let $u$ be a positive integer. We set
\begin{align}\label{3.3}
n_u:=H_Y(u)-1,\ l_u:=\binom{l+u-1}{u}-1,
\end{align}
and define the space
$$ Y_u=\C[y_1,\ldots,y_p]_u/(I_Y)_u, $$
which is a vector space of dimension $n_u+1$. We fix a basis $\{v_0,\ldots, v_{n_u}\}$ of $Y_u$ and consider the meromorphic mapping $F$ with a reduced representation
$$ \tilde F=(v_0(\Phi\circ \tilde f),\ldots ,v_{n_u}(\Phi\circ \tilde f)):\C^m\rightarrow \C^{n_u+1}. $$
Hence $F$ is linearly nondegenerate, since $f$ is algebraically nodegenerate.

Now, we fix an index $i\in\{1,....,n_0\}$ and a point $z\in S(i)$. We define 
$${\bf c}_z = (c_{1,1,z},\ldots,c_{1,k+1,z},c_{2,1,z},\ldots,c_{2,k+1,z},\ldots,c_{n_0,1,z},\ldots,c_{n_0,k+1,z})\in\mathbb Z^{p},$$ 
where
\begin{align}\label{3.4}
c_{i,j,z}:=\log\frac{||\tilde f(z)||^d||P_{i,j}||}{|P_{i,j}(\tilde f)(z)|}\text{ for } i=1,...,n_0 \text{ and }j=1,...,k+1.
\end{align}
We see that $c_{i,j,z}\ge 0$ for all $i$ and $j$. By the definition of the Hilbert weight, there are ${\bf a}_{1,z},...,{\bf a}_{H_Y(u),z}\in\mathbb N^{l}$ with
$$ {\bf a}_{i,z}=(a_{i,1,1,z},\ldots,a_{i,1,k+1,z},\ldots,a_{i,n_0,1,z},\ldots,a_{i,n_0,k+1,z}), a_{i,j,s,z}\in\{1,...,l_u\}, $$
 such that the residue classes modulo $(I_Y)_u$ of ${\bf y}^{{\bf a}_{1,z}},...,{\bf y}^{{\bf a}_{H_Y(u),z}}$ form a basic of $\C[y_1,...,y_p]_u/(I_Y)_u$ and
\begin{align}\label{3.5}
S_Y(u,{\bf c}_z)=\sum_{i=1}^{H_Y(u)}{\bf a}_{i,z}\cdot{\bf c}_z.
\end{align}
We see that ${\bf y}^{{\bf a}_{i,z}}\in Y_m$ (modulo $(I_Y)_m$). Then we may write
$$ {\bf y}^{{\bf a}_{i,z}}=L_{i,z}(v_0,\ldots ,v_{H_Y(u)}), $$ 
where $L_{i,z}\ (1\le i\le H_Y(u))$ are independent linear forms.
We have
\begin{align*}
\log\prod_{i=1}^{H_Y(u)} |L_{i,z}(\tilde F(z))|&=\log\prod_{i=1}^{H_Y(u)}\prod_{\overset{1\le t\le n_0}{1\le j\le k+1}}|P_{tj}(\tilde f(z))|^{a_{i,t,j,z}}\\
&=-S_Y(m,{\bf c}_z)+duH_Y(u)\log ||\tilde f(z)|| +O(uH_Y(u)).
\end{align*}
This implies that
\begin{align*}
\log\prod_{i=1}^{H_Y(u)}\dfrac{||\tilde F(z)||\cdot ||L_{i,z}||}{|L_{i,z}(\tilde F(z))|}=&S_Y(u,{\bf c}_z)-duH_Y(u)\log ||\tilde f(z)|| \\
&+H_Y(u)\log ||\tilde F(z)||+O(uH_Y(u)).
\end{align*}
Here we note that $L_{i,z}$ depends on $i$ and $z$, but the number of these linear forms is finite. We denote by $\mathcal L$ the set of all $L_{i,z}$ occuring in the above inequalities. Then we have
\begin{align}\label{3.6}
\begin{split}
S_Y(u,{\bf c}_z)\le&\max_{\mathcal J\subset\mathcal L}\log\prod_{L\in \mathcal J}\dfrac{||\tilde F(z)||\cdot ||L||}{|L(\tilde F(z))|}+duH_Y(u)\log ||\tilde f(z)||\\
& -H_Y(u)\log ||\tilde F(z)||+O(uH_Y(u)),
\end{split}
\end{align}
where the maximum is taken over all subsets $\mathcal J\subset\mathcal L$ with $\sharp\mathcal J=H_Y(u)$ and $\{L;L\in\mathcal J\}$ is linearly independent.
From Theorem \ref{2.12} we have
\begin{align}\label{3.7}
\dfrac{1}{uH_Y(u)}S_Y(u,{\bf c}_z)\ge&\frac{1}{(k+1)\Delta}e_Y({\bf c}_z)-\frac{(2k+1)\Delta}{u}\max_{\underset{1\le i\le n_0}{1\le j\le k+1}}c_{i,j,z}
\end{align}
We chose an index $i_0$ such that $z\in S(i_0)$. It is clear that
\begin{align*}
\max_{\underset{1\le i\le n_0}{1\le j\le k+1}}c_{i,j,z}\le \sum_{1\le j\le k+1}\log\frac{||\tilde f(z)||^d||P_{i_0,j}||}{|P_{i_0,j}(\tilde f)(z)|}+O(1),
\end{align*}
where the term $O(1)$ does not depend on $z$ and $i_0$.
Combining (\ref{3.6}), (\ref{3.7}) and the above remark, we get
\begin{align}\nonumber
\frac{1}{(k+1)\Delta}e_Y({\bf c}_z)\le &\dfrac{1}{uH_Y(u)}\left (\max_{\mathcal J\subset\mathcal L}\log\prod_{L\in \mathcal J}\dfrac{||\tilde F(z)||\cdot ||L||}{|L(\tilde F(z))|}-H_Y(u)\log ||\tilde F(z)||\right )\\
\label{3.8}
\begin{split}
&+d\log ||\tilde f(z)||+\frac{(2n+1)\Delta}{m}\max_{\underset{1\le i\le n_0}{1\le j\le k+1}}c_{i,j,z}+O(1/m)\\
\le &\dfrac{1}{uH_Y(u)}\left (\max_{\mathcal J\subset\mathcal L}\prod_{L\in\mathcal J}\dfrac{||\tilde F(z)||\cdot ||L||}{|L(\tilde F(z))|}-H_Y(u)\log ||\tilde F(z)||\right )\\
&+d\log ||\tilde f(z)||+\frac{(2n+1)\Delta}{m}\sum_{1\le j\le k}\log\frac{||\tilde f(z)||^d||P_{i_0,j}||}{|P_{i_0,j}(\tilde f)(z)|}+O(1/m).
\end{split}
\end{align}
Since $P_{i_0,1}...,P_{i_0,k+1}$ are in general with respect to $X$, By Lemma \ref{2.13}, we have
\begin{align}\label{3.9}
e_Y({\bf c}_z)\ge (c_{i_0,1,z}+\cdots +c_{i_0,k+1,z})\cdot\Delta =\left (\sum_{1\le j\le k}\log\frac{||\tilde f(z)||^d||P_{i_0,j}||}{|P_{i_0,j}(\tilde f)(z)|}\right )\cdot\Delta.
\end{align}
Then, from (\ref{3.2}), (\ref{3.8}) and (\ref{3.9}) we have
\begin{align}\label{3.10}
\begin{split}
\frac{1}{N-k+1}&\log \prod_{i=1}^q\dfrac{||\tilde f (z)||^d}{|Q_i(\tilde f)(z)|}\le\dfrac{k+1}{uH_Y(u)}\left (\max_{\mathcal J\subset\mathcal L}\log\prod_{L\in\mathcal J}\dfrac{||\tilde F(z)||\cdot ||L||}{|L(\tilde F(z))|}-H_Y(u)\log ||\tilde F(z)||\right )\\
&+d(k+1)\log ||\tilde f(z)||+\frac{(2k+1)(k+1)\Delta}{u}\sum_{\underset{1\le i\le n_0}{1\le j\le k+1}}\log\frac{||\tilde f(z)||^d||P_{i,j}||}{|P_{i,j}(\tilde f)(z)|}+O(1),
\end{split}
\end{align}
where the term $O(1)$ does not depend on $z$. 

Integrating both sides of the obove inequality, we obtain 
\begin{align}\nonumber
\frac{1}{N-k+1}\sum_{i=1}^qm_f(r,Q_i)\le& \dfrac{k+1}{uH_Y(u)}\left (\int\limits_{S(r)}\max_{\mathcal J\subset\mathcal L}\log\prod_{L\in\mathcal J}\dfrac{||\tilde F(z)||\cdot ||L||}{|L(\tilde F(z))|}\sigma_m-H_Y(u)T_F(r)\right )\\
\label{3.11}
&+d(k+1)T_f(r)+\frac{(2k+1)(k+1)\Delta}{u}\sum_{{\underset{1\le i\le n_0}{1\le j\le k+1}}}m_f(r,P_{i,j}).
\end{align} 
On the other hand, by Theorem \ref{2.4}, for every $\epsilon'>0$ (which will be chosen later) we have
\begin{align*}
\bigl |\bigl |\ \int\limits_{S(r)}\max_{\mathcal J\subset\mathcal L}&\log\prod_{L\in\mathcal J}\dfrac{||\tilde F(z)||\cdot ||L||}{|L(\tilde F(z))|}\sigma_m-H_Y(u)T_F(r)\\
&\le -N_{W(\tilde F)}(r)+\epsilon' T_F(r)\le  -N_{W(\tilde F)}(r)+\epsilon' du T_f(r).
\end{align*}
Combining this inequality with (\ref{3.11}), we have
\begin{align}\label{3.12}
\begin{split}
\bigl (q-(N-k+1)&(k+1)\bigl )T_f(r)\\
\le& \sum_{i=1}^q\frac{1}{d}N_{Q_i(f)}(r)-\dfrac{(N-k+1)(k+1)}{duH_Y(u)}(N_{W(\tilde F)}(r)-\epsilon' du T_f(r))\\
&+\frac{(N-k+1)(2k+1)(k+1)\Delta}{ud}\sum_{{\underset{1\le i\le n_0}{1\le j\le k+1}}}m_f(r,P_{i,j})
\end{split}
\end{align}

We now estimate the quantity $N_{W(\tilde F)}(r)$. We consider a point $z\in\C^m$ which is outside the indeterminacy locus of $f$. Without loss of generality, we may assume that $z\in S(1)$, where $I_1=(1,....,q)$. Then we see that $\nu^0_{Q_i(f)}(z)=0$ for all $i\ge N+1$, since $\{Q_1,...,Q_q\}$ is in $N$-subgeneral position in $V$. We set $c_{i,j}=\max\{0,\nu^0_{P_{i,j}}(z)-H_Y(u)\}$ and 
$${\bf c}=(c_{1,1},\ldots,c_{1,k+1},\ldots,c_{n_0,1},\ldots,c_{n_0,k+1})\in\mathbb Z^l_{\ge 0}.$$
Then there are 
$${\bf a}_i=(a_{i,1,1},\ldots,a_{i,1,k+1},\ldots,a_{i,n_0,1},\ldots,a_{i,n_0,k+1}),a_{i,j,s}\in\{1,...,l_u\}$$
such that ${\bf y}^{{\bf a}_1},...,{\bf y}^{{\bf a}_{H_Y(u)}}$ is a basic of $Y_u$ and
$$ S_Y(m,{\bf c})=\sum_{i=1}^{H_Y(u)}{\bf a}_i{\bf c}.$$
Similarly as above, we write ${\bf y}^{{\bf a}_i}=L_i(v_1,...,v_{H_Y(u)})$, where $L_1,...,L_{H_Y(u)}$ are independent linear forms in variables $y_{i,j}\ (1\le i\le n_0,1\le j\le k+1)$. By the property of the general Wronskian, we see that 
$$W(\tilde F)=cW(L_1(\tilde F),...,L_{H_Y(u)}(\tilde F)),$$
where $c$ is a nonzero constant. This yields that
$$ \nu^0_{W(\tilde F)}(z)=\nu^0_{W(L_1(\tilde f),...,L_{H_Y(u)}(\tilde F))}\ge\sum_{i=1}^{H_Y(u)}\max\{0,\nu^0_{L_i(\tilde F)}(z)-n_u\}$$
We also easily see that
$$ \nu^0_{L_i(\tilde F)}(z)=\sum_{\underset{1\le j\le n_0}{1\le s\le k+1}}a_{i,j,s}\nu^0_{P_{j,s}(\tilde f)}(z), $$
and hence
$$ \max\{0,\nu^0_{L_i(\tilde F)}(z)-n_u\}\ge\sum_{i=1}^{H_Y(u)}a_{i,j,s}c_{j,s}={{\bf a}_i}\cdot{\bf c}. $$
Thus, we have
\begin{align}\label{3.13}
 \nu^0_{W(\tilde F)}(z)\ge\sum_{i=1}^{H_Y(u)}{{\bf a}_i}\cdot{\bf c}=S_Y(u,{\bf c}).
\end{align}
Since $P_{1,1},...,P_{1,k+1}$ are in general position, then by Lemma \ref{2.13} we have
$$ e_Y({\bf c})\ge \Delta\cdot\sum_{j=1}^{k+1}c_{1,j}=\Delta\cdot\sum_{j=1}^{k+1}\max\{0,\nu^0_{P_{1,j}(f)}(z)-n_u\}. $$
On the other hand, by Theorem \ref{2.12} we have that 
\begin{align*}
 S_Y(u,{\bf c}) &\ge\frac{uH_Y(u)}{(k+1)\Delta}e_Y({\bf c})-(2k+1)\Delta H_Y(u)\max_{\underset{1\le i\le n_0}{1\le j\le k+1}}c_{i,j}\\
&\ge \frac{uH_Y(u)}{k+1}\sum_{j=1}^{k+1}\max\{0,\nu^0_{P_{1,j}(\tilde f)}(z)-n_u\}-(2k+1)\Delta H_Y(u)\max_{\underset{1\le i\le n_0}{1\le j\le k+1}}\nu^0_{P_{i,j}(\tilde f)}(z).
\end{align*}
Combining this inequality and (\ref{3.13}), we have
\begin{align}\label{3.14}
\begin{split}
\dfrac{k+1}{duH_Y(u)}\nu^0_{W(\tilde F)}(z)\ge&\frac{1}{d}\sum_{j=1}^{k+1}\max\{0,\nu^0_{P_{1,j}(\tilde f)}(z)-n_u\}\\
&-\frac{(2k+1)(k+1)\Delta}{du}\max_{\underset{1\le i\le n_0}{1\le j\le k+1}}\nu^0_{P_{i,j}(\tilde f)}(z).
\end{split}
\end{align}
Also it is easy to see that $\nu^0_{P_{1,j}(\tilde f)}(z)\ge \nu^0_{Q_{N-k+j}(\tilde f)}(z)$ for all $2\le j\le k+1$. Therefore, we have
\begin{align*}
(N-k+1)&\sum_{j=1}^{k+1}\max\{0,\nu^0_{P_{1,j}(\tilde f)}(z)-n_u\}\\
&\ge (N-k+1)\max\{0,\nu^0_{Q_{1}(\tilde f)}(z)-n_u\}+\sum_{j=2}^{k+1}\max\{0,\nu^0_{Q_{N-k+j}(\tilde f)}(z)-n_u\}\\
&\ge\sum_{i=1}^{N+1}\max\{0,\nu^0_{Q_i(\tilde f)}(z)-n_u\}=\sum_{i=1}^{q}\max\{0,\nu^0_{Q_i(\tilde f)}(z)-n_u\}.
\end{align*}
Combining this inequality and (\ref{3.14}), we have
\begin{align*}
\dfrac{(N-k+1)(k+1)}{duH_Y(u)}&\nu^0_{W(\tilde F)}(z)\ge\frac{1}{d}\sum_{i=1}^{q}\max\{0,\nu^0_{Q_i(\tilde f)}(z)-n_u\}\\
&-\frac{(N-k+1)(2k+1)(k+1)\Delta}{du}\max_{\underset{1\le i\le n_0}{1\le j\le k+1}}\nu^0_{P_{i,j}(\tilde f)}(z)\\
\ge& \frac{1}{d}\sum_{i=1}^{q}(\nu^0_{Q_i(\tilde f)}(z)-\min\{\nu^0_{Q_i(\tilde f)}(z),u\})\\
&-\frac{(N-k+1)(2k+1)(k+1)\Delta}{du}\max_{\underset{1\le i\le n_0}{1\le j\le k+1}}\nu^0_{P_{i,j}(\tilde f)}(z).
\end{align*}
Integrating both sides of this inequality, we obtain 
\begin{align}\label{3.15}
\begin{split}
\dfrac{(N-k+1)(k+1)}{duH_Y(u)}N_{W(\tilde F)}(r)&\ge\frac{1}{d}\sum_{i=1}^{q}(N_{Q_i(f)}(r)-N^{[n_u]}_{Q_i(f)}(r))\\
&-\frac{(N-k+1)(2k+1)(k+1)\Delta}{du}\max_{\underset{1\le i\le n_0}{1\le j\le k+1}}N_{P_{i,j}(f)}(r).
\end{split}
\end{align}

Combining inequalities (\ref{3.12}) and (\ref{3.15}), we get
\begin{align}\label{3.16}
\begin{split}
\bigl |\bigl |\ (q-&(N-k+1)(k+1)\bigl )T_f(r)\le \sum_{i=1}^q\frac{1}{d}N^{[n_u]}_{Q_i(f)}(r)+\dfrac{(N-k+1)(k+1)}{H_Y(u)}\epsilon' T_f(r))\\
&+\frac{(N-k+1)(2k+1)(k+1)\Delta}{ud}\sum_{{\underset{1\le i\le n_0}{1\le j\le k+1}}}(N_{P_{i,j}(f)}(r)+m_f(r,P_{i,j}))\\
=&\sum_{i=1}^q\frac{1}{d}N^{[n_u]}_{Q_i(f)}(r)+\left (\dfrac{(N-k+1)(k+1)\epsilon'}{H_Y(u)}+\frac{(N-k+1)(2k+1)(k+1)p\Delta}{u}\right)T_f(r).
\end{split}
\end{align}

We now choose $u$ is the biggest integer such that
$$ u< (N-k+1)(2k+1)(k+1)p\Delta\epsilon^{-1} $$
and 
$$\epsilon'=\epsilon-\frac{(N-k+1)(2k+1)(k+1)p\Delta}{u}>0.$$
Form (\ref{3.16}), we have
\begin{align}\label{3.17}
\bigl |\bigl |\ (q-(N-k+1)(k+1)-\epsilon\bigl )T_f(r)\le \sum_{i=1}^q\frac{1}{d}N^{[n_u]}_{Q_i(f)}(r).
\end{align}
We note that $\deg Y=\Delta\le d^k\deg (V)$. Then the number $n_u$ is estimated as follows
\begin{align*}
n_u&=H_Y(u)-1\le\Delta \binom{k+u}{k}\le d^k\deg (V)e^{k}\left(1+\frac{u}{k}\right)^{k}\\
&<d^k\deg (V)e^k\left ((N-k+1)(2k+4)p\Delta\epsilon^{-1}\right)^k\\
&\le \left[\deg (V)^{k+1}e^kd^{k^2+k}(N-k+1)^k(2k+4)^kp^k\epsilon^{-k}\right]=M_0.
\end{align*} 
Then, the theorem is proved for the case where all hypersurfaces $Q_i$ have the same degree.

Now, for the general case where $Q_i$ is of the degree $d_i$, $i=1,...,q,$ and $d=lcm (d_1,...,d_q)$. Then the hypersurfaces $Q_i^{d/d_i}\ (1\le i\le q)$ are of the same degree $d$. Applying the above result, we have
\begin{align*}
\bigl |\bigl |\ (q-(N-k+1)(k+1)-\epsilon)T_f(r)&\le\sum_{i=1}^{q}\frac{1}{d}N^{[M_0]}_{Q^{d/d_i}(f)}(r)\\
&\le\sum_{i=1}^{q}\frac{1}{d_i}N^{[M_0]}_{Q^{d/d_i}(f)}(r)+o(T_f(r)).
\end{align*}
This completed the proof of the theorem.
\end{proof}

\begin{proof}[{\sc Proof of Theorem \ref{1.3}}]
Similar as the argument of the proof of Theorem \ref{1.1}, we may assume that all $Q_i\ (1\le i\le q)$ are of the same degree $d$ and $q>(N-k+1)(k+1)$.
Letting $V=\P^n(\C)$, $k=n$ and using the the same notation, we will repeat some arguments as in the proof Theorem \ref{1.1}. Similarly as (\ref{3.14}), for $z\in S(i_0)$ we have
\begin{align}\label{3.18}
\begin{split}
\log \prod_{i=1}^q\dfrac{||\tilde f (z)||^d}{|Q_i(\tilde f)(z)|}&\le \log(A^{q-N-1}B^{k}C^{(N-k)})+(N-k+1)\log \dfrac{||\tilde f (z)||^{(k+1)d}}{\prod_{j=1}^{k+1}|P_{i_0,j}(\tilde f)(z)|}\\
&\le (N-k+1)\log \dfrac{||\tilde f (z)||^{kd}}{\prod_{j=1}^{k}|P_{i_0,j}(\tilde f)(z)|}+O(1),
\end{split}
\end{align}
where the term $O(1)$ does not depend on $z$.

Now, for a positive integer $u$, we denote by $V_u$ the vector subspace of $\C[x_0,\ldots, x_n]$ which consists of  all homogeneous polynomials of degree $u$ and zero polynomial. We consider $u$ divisible by $d$. For each $(i)=(i_1,\ldots,i_n)\in\mathbf N_0^n$ with $\sigma (i)=\sum_{s=1}^ni_s\le\frac{N}{d}$, we set
$$W^{i_0}_{(i)}=\sum_{(j)=(j_1,\ldots ,j_k)\ge (i)}P_{i_0,1}^{j_1}\cdots P_{i_0,k}^{j_k}\cdot V_{u-d\sigma (j)}.$$
Then we see that $W^{i_0}_{(0,\ldots,0)}=V_u$ and $W^{i_0}_{(i)}\supset W^{i_0}_{(j)}$ if $(i)<(j)$ (in the sense of lexicographic order). Therefore, $W^{i_0}_{(i)}$ is a filtration of $V_u$. 
We have the following lemma due to Corvaja and Zannier in \cite{CZ}.
\begin{lemma}[see \cite{CZ}]\label{3.19}
Let $(i)=(i_1,\ldots ,i_k),(i)'=(i_1',\ldots ,i_k')\in \mathbf N^k_0$. Suppose that $(i')$ follows $(i)$ in the lexicographic ordering and defined
$$ m^{i_0}_{(i)}=\dim \dfrac{W^{i_0}_{(i)}}{W^{i_0}_{(i)'}}.$$
Then, we have $m^{i_0}_{(i)}=d^k,$ provided $d \sigma (i)<u-kd$. 
\end{lemma}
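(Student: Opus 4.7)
The plan is to identify the successive quotient $W^{i_0}_{(i)}/W^{i_0}_{(i)'}$ with a graded piece of the coordinate ring of a zero-dimensional complete intersection, and to read off its dimension from the Hilbert polynomial. In the setting of the proof of Theorem \ref{1.3} we have $V=\P^n(\C)$ and $k=n$; Lemma \ref{3.1} applied step by step yields $\dim\bigl(\bigcap_{t=1}^{s}P_{i_0,t}\bigr)\le k-s$ for each $s$, and specializing to $s=k$ shows that $P_{i_0,1},\ldots,P_{i_0,k}$ cut out a subscheme of dimension at most $0$ in $\P^n$. Since $R:=\C[x_0,\ldots,x_n]$ is Cohen--Macaulay, this codimension equality forces $P_{i_0,1},\ldots,P_{i_0,k}$ to form a regular sequence, so $J:=(P_{i_0,1},\ldots,P_{i_0,k})$ is the defining ideal of a zero-dimensional complete intersection of $k$ forms of degree $d$.

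The immediate lex-successor of $(i)=(i_1,\ldots,i_k)$ is $(i)'=(i_1,\ldots,i_{k-1},i_k+1)$, and by the definition of the filtration $W^{i_0}_{(i)}=W^{i_0}_{(i)'}+P_{i_0,1}^{i_1}\cdots P_{i_0,k}^{i_k}\cdot V_{u-d\sigma(i)}$. Multiplication by the monomial $P_{i_0,1}^{i_1}\cdots P_{i_0,k}^{i_k}$ thus defines a surjection $\Psi\colon V_{u-d\sigma(i)}\twoheadrightarrow W^{i_0}_{(i)}/W^{i_0}_{(i)'}$, and the key step is the identification $\ker\Psi=J_{u-d\sigma(i)}$. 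The inclusion $\supset$ is immediate, since multiplying $\sum_\ell P_{i_0,\ell}h_\ell$ by $P_{i_0,1}^{i_1}\cdots P_{i_0,k}^{i_k}$ produces a sum of terms with multi-indices strictly exceeding $(i)$ in lex order. For the reverse inclusion I will use the decomposition
$$W^{i_0}_{(i)'}=P_{i_0,1}^{i_1+1}V_{u-d(i_1+1)}+P_{i_0,1}^{i_1}\tilde W',$$
where $\tilde W'$ is the analogous filtration built from $P_{i_0,2},\ldots,P_{i_0,k}$ at the lex-successor of $(i_2,\ldots,i_k)$ in total degree $u-di_1$, then cancel the non-zerodivisor $P_{i_0,1}^{i_1}$ and pass modulo $P_{i_0,1}$. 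Since $\overline{P_{i_0,2}},\ldots,\overline{P_{i_0,k}}$ remains a regular sequence in $R/(P_{i_0,1})$, an induction on $k$ (the base case $k=1$ reducing to the fact that $P_{i_0,1}$ is a non-zerodivisor) yields $\bar g\in(\overline{P_{i_0,2}},\ldots,\overline{P_{i_0,k}})$, whence $g\in J$.

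With $\ker\Psi=J_{u-d\sigma(i)}$ in hand, $\Psi$ descends to an isomorphism $W^{i_0}_{(i)}/W^{i_0}_{(i)'}\cong(R/J)_{u-d\sigma(i)}$, so $m^{i_0}_{(i)}=H_{R/J}(u-d\sigma(i))$. From the Koszul resolution the Hilbert series of $R/J$ is $(1+t+\cdots+t^{d-1})^k/(1-t)$, whose Hilbert polynomial is the constant $d^k$ and whose Castelnuovo--Mumford regularity equals $k(d-1)$; accordingly $H_{R/J}(s)=d^k$ for every $s>k(d-1)$. The hypothesis $d\sigma(i)<u-kd$ rewrites as $u-d\sigma(i)>kd>k(d-1)$, yielding $m^{i_0}_{(i)}=d^k$. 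I anticipate the principal obstacle to be the kernel identification: the lex decomposition of $W^{i_0}_{(i)'}$ and the iterated regular-sequence cancellation demand careful bookkeeping, whereas the Hilbert-series and regularity facts for a complete intersection are standard consequences of the Koszul complex.
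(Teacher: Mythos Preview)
The paper does not supply its own proof of this lemma; it is quoted directly from Corvaja--Zannier \cite{CZ}, so there is no in-paper argument to compare against. Your proposal is correct and is essentially the argument of \cite{CZ}: identify the successive quotient with the graded piece $(R/J)_{u-d\sigma(i)}$ for $J=(P_{i_0,1},\ldots,P_{i_0,k})$, use that these forms are a regular sequence (their common zero locus has the expected codimension in the Cohen--Macaulay ring $R=\C[x_0,\ldots,x_n]$), and read off the stable value $d^k$ of the Hilbert function from the Koszul resolution.

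Two minor remarks. First, your induction for the kernel identification takes place in $R/(P_{i_0,1})$, which is no longer a polynomial ring; the inductive statement should therefore be phrased for an arbitrary graded $\C$-algebra equipped with a homogeneous regular sequence, as you implicitly do when invoking that $\overline{P_{i_0,2}},\ldots,\overline{P_{i_0,k}}$ remains regular. Second, the Hilbert function of $R/J$ already equals $d^k$ for $s\ge k(d-1)$ (the numerator $(1+t+\cdots+t^{d-1})^k$ has degree exactly $k(d-1)$), not merely for $s>k(d-1)$; since the hypothesis $d\sigma(i)<u-kd$ gives $u-d\sigma(i)\ge kd+1>k(d-1)$, your conclusion is unaffected.
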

We assume that 
$$ V_u=W^{i_0}_{(i)_1}\supset W^{i_0}_{(i)_2}\supset\cdots\supset W^{i_0}_{(i)_K}, $$
where $(i)_s=(i_{1s},...,i_{us})$, $W^{i_0}_{(i)_{s+1}}$ follows $W^{i_0}_{(i)_s}$ in the ordering and $(i)_K=(\frac{u}{d},0,\ldots ,0)$. We see that $K$ is the number of $k$-tuples $(i_1,\ldots,i_k)$ with $i_j\ge 0$ and $i_1+\cdots +i_k\le\frac{k}{d}$. Then we easily estimate that
$$ K =\binom{\frac{u}{d}+n}{n}.$$
We define $m^{i_0}_s=\dim\frac{W^{i_0}_{(i)_s}}{W^{i_0}_{(i)_{s+1}}}$ for all $s=1,\ldots, K-1$ and set $m^{i_0}_K=1$. 

Let $m_u=\dim V_u$. From the above filtration, we may choose a basis $\{\psi^{i_0}_1,\ldots,\psi^{i_0}_{m_u}\}$ of $V_u$ such that  
$$\{\psi_{m_u-(m^{i_0}_s+\cdots +m^{i_0}_K)+1},\ldots ,\psi_{m_u}\}$$
 is a basis of $W^{i_0}_{(i)_s}$. For each $s\in\{1,\ldots,K\}$ and $l\in\{m_u-(m^{i_0}_s+\cdots +m^{i_0}_K)+1,\ldots, m_u-(m^{i_0}_{s+1}+\cdots +m^{i_0}_K)\}$, we may write
$$ \psi^{i_0}_l=P_{i_0,1}^{i_{1s}}\ldots P_{i_0,n}^{i_{ns}}h_l,\ \text{ where } (i_{1s},\ldots,i_{ks})=(i)_s, h_l\in W^{i_0}_{u-d\sigma (i)_s}. $$
Then we have
\begin{align*}
|\psi^{i_0}_l(\tilde f)(z)|&\le |P_{i_0,1} (\tilde f)(z)|^{i_{1s}}\ldots |P_{i_0,n} (\tilde f)(z)|^{i_{ks}}|h_l(\tilde f)(z)|\\
& \le c_2|P_{i_0,1} (\tilde f)(z)|^{i_{1s}}\ldots |P_{i_0,n} (\tilde f)(z)|^{i_{ks}}||\tilde f(z)||^{u-d\sigma(i)_s}\\
&=c_2\left (\dfrac{|P_{i_0,1} (\tilde f)(z)|}{||\tilde f(z)||^d}\right)^{i_{1s}}\ldots\left (\dfrac{|P_{i_0,n} (\tilde f)(z)|}{||\tilde f(z)||^d}\right)^{i_{ks}}||\tilde f (z)||^u,
\end{align*} 
where $c_2$ is a positive constant independently from $l$, $i_0$, $f$ and $z$. This implies that
\begin{align}\label{3.20}
\begin{split}
\log\prod_{l=1}^{m_u}|\psi^{i_0}_l(\tilde f)(z)|&\le\sum_{s=1}^Km^{i_0}_s\left (i_{1s}\log\dfrac{|P_{i_0,1} (\tilde f)(z)|}{||\tilde f(z)||^d}+\cdots+i_{ns}\log\dfrac{|P_{i_0,n} (\tilde f)(z)|}{||\tilde f(z)||^d}\right)\\ 
& \ \ \ +m_uu\log ||\tilde f (z)||+\log c_2.
\end{split}
\end{align}

We fix $\phi_1,...,\phi_{m_u}$, a basic of $V_u$, $\psi_s^{i_0}(\tilde f)=L_s^{i_0}(\tilde F)$, where $L_s^{i_0}\ (1\le s\le m_u)$ are linear forms and $\tilde F=(\phi_1(\tilde f),\ldots ,\phi_{u}(\tilde f))$ is a reduced representation of a meromorphic mapping $F$. We set
\begin{align*}
b_j^{i_0}=\sum_{s=1}^{K}m_s^{i_0}i_{js},\ 1\le j\le k.
\end{align*}
From (\ref{3.20}) we have that
$$\log\prod_{s=1}^{m_u}|L_{s}^{i_0}(\tilde F)(z)|\le\log\left(\prod_{j=1}^{n}\biggl (\dfrac{|P_{i_0,j}(\tilde f)(z)|}{||\tilde f(z)||^d}\biggl )^{b_j^{i_0}}\right)+m_uu\log ||\tilde f(z)||+\log c_2.$$
We set $b=\min_{j,i_0}b_j^{i_0}$. Since $f$ is algebraically non degenerate over $\C$, $F$ is linearly non degenerate over $\C$. Then there exists an admissible set $\alpha =(\alpha_1,...,\alpha_{m_u})\in (\mathbb Z_+^m)^u$, with $|\alpha_s|\le s-1$, such that the general Wronskian satisfying
\begin{align*}
W(\Phi(\tilde f)):=W^{\alpha}(\Phi(\tilde f))=W^{\alpha}(\phi_1(\tilde f),....,\phi_{m_u}(\tilde f))=\det (\mathcal D^{\alpha_i}(\phi_s(\tilde f)))_{1\le i,s\le m_u}\not\equiv 0,
\end{align*}
where $\Phi =(\phi_1,...,\phi_{m_u})$.
We also have
\begin{align}\label{3.21}
\begin{split}
\log \frac{||\tilde f(z)||^{qdb}|W^{\alpha}(\Phi(\tilde f))(z)|^p}{\prod_{i=1}^{q}|Q_i(\tilde f)(z)|^b}&\le \log\frac{||\tilde f(z)||^{pndb}|W^\alpha(\Phi(\tilde f))(z)|^p}{\prod_{j=1}^n|P_{i_0,j}(\tilde f)(z)|^{pb}}+O(1)\\
&\le\log \frac{||\tilde f(z)||^{pd\sum_{j=1}^{n}b_j^{i_0}}|W^{\alpha}(\Phi(\tilde f))(z)|^p}{\prod_{j=1}^{n}|P_{i_0,j}(\tilde f)(z)|^{pb^{i_0}_j}} +O(1)\\
&\le \log \frac{||\tilde f(z)||^{pm_uu}|W^{\alpha}(\Phi(\tilde f))(z)|^p}{\prod_{i=1}^{m_u}|\psi_i^{i_0}(\tilde f)(z)|^p}+O(1)\\
&\le \log \frac{||\tilde f(z)||^{pm_uu}|W^{\alpha}(\Psi^{i_0}(\tilde f))(z)|^p}{\prod_{i=1}^{u}|\psi_i^{i_0}(\tilde f)(z)|^p}+O(1),
\end{split}
\end{align}
where $\Psi^{i_0}=(\psi^{i_0},...,\psi^{i_0}_{m_u})$, $W^{\alpha}(\Psi^{i_0}(\tilde f))=\det (\mathcal D^{\alpha_i}(\psi^{i_0}_{s}(\tilde f)))_{1\le i,s\le m_u}$ and the term $O(1)$ depends only on $u$ and $\{Q_i\}_{i=1}^{q}$.
This inequality implies that
\begin{align}\label{3.22}
\log \frac{||\tilde f(z)||^{qdb-pm_uu}|W^{\alpha}(\Phi(\tilde f))(z)|^p}{(\prod_{i=1}^{q}|Q_i(\tilde f)(z)|^b)}\le\log\frac{|W^{\alpha}(\Phi(\tilde f))(z)|^p}{\prod_{i=1}^{m_u}|\psi_i^{i_0}(\tilde f)(z)|^p}+O(1),
\end{align}
for all $z\in\C^m$ outside a proper analytic subset of $\C^m$, which is the union of zero sets of functions $Q_i(\tilde f), P_{i_0,j}(\tilde f)$.

From the above inequality, we obtain
\begin{align}\label{3.23}
\log \frac{||\tilde f(z)||^{qdb-pm_uu}|W^{\alpha}(\Phi(\tilde f))(z)|^p}{(\prod_{i=1}^{q}|Q_i(\tilde f)(z)|^b)}\le\sum_{i_0=1}^{n_0}\log^{+} \frac{|W^{\alpha}(\Phi(\tilde f))(z)|^p}{\prod_{i=1}^{m_u}|\psi_i^{i_0}(\tilde f)(z)|^p}+O(1),
\end{align}
for all $z\in\C^m$ outside the union of zero sets of functions $Q_i(\tilde f)$ and $P_i(\tilde f)$.

We easily see that $W^{\alpha}(\Phi(\tilde f))(z)=C_{i_0}W^{\alpha}(\Psi^{i_0}(\tilde f))$, where
$$ W^{\alpha}(\Psi^{i_0}(\tilde f))=\det (\mathcal D^{\alpha_i}(\psi^{i_0}_{s}(\tilde f)))_{1\le i,s\le m_u} $$
and $C_{i_0}$ is a nonzero constant. Then by using the lemma on logarithmic derivative, we easily have
$$\biggl |\biggl |\ \int_{S(r)}\log^+\frac{|W^{\alpha}(\Phi(\tilde f))(z)|^p}{\prod_{i=1}^{m_u}|\psi_i^{i_0}(\tilde f)(z)|^p}\sigma_m=\int_{S(r)}\log^+\frac{|W^{\alpha}(\Psi^{i_0}(\tilde f))(z)|^p}{\prod_{i=1}^{m_u}|\psi_i^{i_0}(\tilde f)(z)|^p}\sigma_m =o(T_f(r)).$$
Integrating both sides of (\ref{3.23}) over $S(r)$ with the help of the above inequality, we obtain
$$ ||\ (qdb-pm_uu)T_f(r)+pN_{W^{\alpha}(\Phi(\tilde f))}(r)-b\sum_{i=1}^qN_{Q_i(f)}(r)\le o(T_f(r)),$$
which means that
\begin{align}\label{3.24}
 ||\ (q-\frac{pm_uu}{db})T_f(r)\le\sum_{i=1}^q\dfrac{1}{d}N_{Q_i(f)}(r)-\frac{p}{db}N_{W^{\alpha}(\Phi(\tilde f))}(r)+o(T_f(r)). 
\end{align}

We now estimate 
$\bigl (\sum_{j=1}^q\nu^0_{Q_j(f)}-\frac{p}{b}\nu^0_{W^{\alpha}(\Phi(\tilde f))}\bigl )$. 
Fix $z\in \C^m$. Without loss of generality, we may assume that
$$\nu^0_{Q_1(\tilde f)}(z)\ge\cdots
\ge\nu^0_{Q_{t}(\tilde f)}(z)>0=\nu^0_{Q_{t+1}(\tilde f)}(z)=\cdots =\nu^0_{Q_{q}(\tilde f)}(z),$$
where $0\le t\le N,$ ($t$ may be zero). We note that $I_1=(1,2,...,q)$ and let $M=m_u-1$. Then we will see that
\begin{align*}
\nu^0_{P_{1,1}(\tilde f)}(z)&=\nu^0_{Q_1(\tilde f)}(z),\\ 
\nu^0_{P_{1,j}(\tilde f)}(z)&\ge\nu^0_{Q_{N-n+j}(\tilde f)}(z) \ (2\le j\le n). 
\end{align*}
 We have
$$p\nu^0_{W^{\alpha}(\Phi(\tilde f))}(z)=p\nu^0_{W(\Psi^{1}(\tilde f))}(z)\ge p\sum_{s=1}^{m_u}\max\{\nu^0_{\psi_s^{1}(\tilde f)}(z)-M,0\}.$$
For $\psi=P_{1,1}^{t_1}...P_{1,n}^{t_n}h\in\{\psi_{s}^{1}\}_{s=1}^{m_u}$, we have
$$\psi (\tilde f)(z)=P_{1,1}^{t_1}(\tilde f)(z)\ldots P_{1,n}^{t_n}(\tilde f)(z).h(\tilde f)(z).$$
Hence
\begin{align*}
\max \{ \nu^0_{\psi (\tilde f)}(z)-M,0\}
\ge & \sum_{j=1}^{n}\max \{\nu^0_{(P_{1,j}^{t_j}(\tilde f)}(z)-M,0 \}\\
\ge & \sum_{j=1}^{n}t_j\max \{\nu^0_{P_{1,j}(\tilde f)}(z)-M,0\}.
\end{align*}
This implies that
\begin{align*}
p\sum_{s=1}^{m_u}\max&\{\nu^0_{\psi_s^{1}(\tilde f)}(z)-M,0\}\ge p\sum_{(i)}m_{(i)}^{1}\sum_{j=1}^{n}t_j\max\{\nu^0_{P_{1,j}(\tilde f)}(z)-M,0\}\\
&=p\sum_{j=1}^{n}b_j^{1}\max\{\nu^0_{P_{1,j}(\tilde f)}(z)-M,0\}\ge p\sum_{j=1}^nb\max\{\nu^0_{P_{1,j}(\tilde f)}(z)-M,0\}\\
&\ge\sum_{j=1}^{N}b\max\{\nu^0_{Q_j(\tilde f)}(z)-M,0\}=\sum_{i=1}^{q}b\max\{\nu^0_{Q_i(\tilde f)}(z)-M,0\}.
\end{align*}
Hence 
\begin{align*}
\sum_{i=1}^{q}\nu^0_{Q_i(\tilde f)}(z)-\frac{p}{b}\nu^0_{W^{\alpha}(\Phi(\tilde f))}(z)&\le\sum_{i=1}^{q}\max\{\nu^0_{Q_i(\tilde f)}(z)-m_u+1,0\}.
\end{align*}
This implies that
\begin{align*}
\sum_{i=1}^qN_{Q_i(f)}(r)-\frac{p}{b}N_{W^{\alpha}(\Phi(\tilde f))}(r)\le\sum_{i=1}^{q}N^{[m_u-1]}_{Q_i(f)}(r).
\end{align*}
Combining (\ref{3.23}) and the above inequality, we have
\begin{align}\label{3.25}
||(qdb-\frac{pm_uu}{db})T_f(r)\le\sum_{i=1}^{q}\frac{1}{d}N^{[m_u-1]}_{Q_i(f)}(r)+o(T_f(r)).
\end{align}

We now have some estimates. First, 
$$m_u=\binom{u+n}{n}=\frac{(u+1)\cdots (u+n)}{1\cdots n}.$$
Second, since the number of nonnegative integer $l$-tuples with summation $\le T$ is equal to the number of nonnegative integer $(l+1)$-tuples with summation exactly equal $T\in \Z$, which is $\bigl (^{T+l}_{\ \ l}\bigl )$ and since the sum below is independent of $j$, we have that
\begin{align*}
b_{j}^{1}=&\sum_{\sigma (i)\le u/d}m^{1}_{(i)}i_{j}\ge\sum_{\sigma (i)\le u/d-n}m^{1}_{(i)}i_{j}\\ 
=&\sum_{\sigma (i)\le u/d-n}d^ni_j=\frac{d^n}{n+1}\sum_{\sigma (i)\le u/d-n}\sum_{j=1}^{n+1}i_j\\
=&\frac{d^n}{n+1}\sum_{\sigma (i)\le u/d-n}\left (\frac{u}{d}-n\right)=\frac{d^n(u-nd)}{(n+1)d}\binom{u/d}{n}=\frac{d^nu(u/d-1)\cdots (u/d-n)}{1\cdots (n+1)d}.
\end{align*}
This implies that 
\begin{align*}
\frac{pm_uu}{db}\le p(n+1)\frac{(u+1)\cdots (u+n)}{(u-d)\cdots (u-nd)}
\le & p(n+1)\prod_{j=1}^n\frac{u+j}{u-(n+1)d+jd}\\
\le & p(n+1)\left (\frac{u}{u-(n+1)d}\right )^n.
\end{align*}

We now note that, for each positive number $x\in (0,\frac{1}{(n+1)^2}]$, we have
\begin{align}\label{3.26}
\begin{split}
(1+x)^{n}&=1+nx+\sum_{i=2}^n\binom{n}{i}x^{i}\le 1+nx+\sum_{i=1}^2\frac{n^{i}}{i!(n+1)^{2i-2}}x\\
&=1+nx+\sum_{i=2}^n\frac{1}{i!}x\le 1+(n+1)x.
\end{split}
\end{align}
We chose $u= (n+1)d+p(n+1)^3I(\epsilon^{-1})d$. Then $u$ is divisible by $d$ and one gets 
\begin{align}\label{3.27}
\frac{(n+1)d}{u-(n+1)d}=\frac{(n+1)d}{p(n+1)^3I(\epsilon^{-1})d}\le \frac{1}{(n+1)^2}.
\end{align}
Therefore, using (\ref{3.26}) and (\ref{3.27}) we have
\begin{align*}
\frac{pm_uu}{db}\le & p(n+1)\left (\frac{u}{u-(n+1)d}\right )^n= p(n+1)\left (1+\frac{(n+1)d}{u-(n+1)d}\right )^n\\
\le &p(n+1)\left (1+(n+1)\frac{(n+1)d}{u-(n+1)d}\right)\\
\le &p(n+1)\left (1+(n+1)\frac{(n+1)d}{p(n+1)^3I(\epsilon^{-1})d}\right )\\
\le &p(n+1)\left (1+\frac{1}{p(n+1)\epsilon^{-1}}\right )=p(n+1)+\epsilon.
\end{align*}
Also, the number $m_u$ is estimated as follows
\begin{align*}
m_u&=\binom{u+n}{n}\le e^n\left (1+\frac{u}{n}\right )^n\le e^n\left (\frac{n+(n+1)d}{n}+\frac{p(n+1)^3I(\epsilon^{-1})d}{n}\right )^n\\
&=e^n(p(n+1)^2I(\epsilon^{-1})d)^n\left (1+\frac{1}{n}+\frac{n+(n+1)d}{np(n+1)^2I(\epsilon^{-1})d}\right)^n\\
&\le \left (edp(n+1)^2I(\epsilon^{-1})\right )^n\cdot\left (1+\frac{1}{n}+\frac{2}{n(n+1)}\right)^n< 4\left (edp(n+1)^2I(\epsilon^{-1})\right )^n=M_0+1.
\end{align*}
Thus, from (\ref{3.24}) we obtain
\begin{align*}
||(q-(N-n+1)(n+1)-\epsilon)T_f(r)\le\sum_{i=1}^{q}\frac{1}{d}N^{[M_0]}_{Q_i(f)}(r)+o(T_f(r)).
\end{align*}
Then we complete the proof of the theorem.
\end{proof}

\vskip0.2cm
{\footnotesize 
\noindent
{\sc Si Duc Quang}\\
Department of Mathematics, Hanoi National University of Education,\\
136-Xuan Thuy, Cau Giay, Hanoi, Vietnam.\\
%$^2$ Thang Long Institute of Mathematics and Applied Sciences,\\
%Nghiem Xuan Yem, Hoang Mai, HaNoi, Vietnam.\\
\textit{E-mail}: quangsd@hnue.edu.vn}

\end{document}